%        File: Continuous_Bar_Induction.tex
%     Created: Tue Sep 29 02:00  2015 C
% Last Change: Tue Sep 29 02:00  2015 C
%
\documentclass{jloganal}    

\title{Principles of bar induction and continuity on {B}aire space}

\author[Tatsuji Kawai]{Tatsuji Kawai}
\givenname{Tatsuji}
\surname{Kawai}
\address{
Japan Advanced Institute of Science and Technology\\\newline
1-1 Asahidai\\Nomi\\Ishikawa 923-1292\\Japan}
\email{tatsuji.kawai@jaist.ac.jp}
\urladdr{}

\keyword{Constructive mathematics}
\keyword{Bar induction}
\keyword{Continuity principle}
\keyword{Brouwer operation}
\keyword{Baire space}

\subject{primary}{msc2010}{03F60}
\subject{primary}{msc2010}{03F55}
\subject{secondary}{msc2010}{03F03}

\volumenumber{}
\issuenumber{}
\publicationyear{}
\papernumber{}
\startpage{}
\endpage{}
\doi{}
\MR{}
\Zbl{}
\received{}
\revised{}
\accepted{}
\published{}
\publishedonline{}
\proposed{}
\seconded{}
\corresponding{}
\editor{}
\version{}

%% The packages specific for this paper.
%\usepackage{amsmath, amsfonts}
\usepackage{amssymb, latexsym}
%\usepackage{amsthm}
%\usepackage[numbers]{natbib}

% Definitions in the list style
\usepackage[shortlabels]{enumitem}
\setlist[enumerate]{leftmargin=*,align=left,labelindent=\parindent}

\newcommand{\Nat}{\mathbb{N}}
\newcommand{\Bin}{\left\{ 0,1 \right\}}

\newcommand{\cFT}{{\textrm{\textup{c--FT}}}}
\newcommand{\cBI}{{\textrm{\textup{c--BI}}}}
\newcommand{\mBI}{{\textrm{\textup{BI$_{\mathbf{M}}$}}}}
\newcommand{\dBI}{{\textrm{\textup{BI$_{\mathbf{D}}$}}}}
\newcommand{\PiOneBI}{{\textrm{\textup{$\Pi_{1}^{0}$-BI}}}}

\newcommand{\UC}{{\textrm{\textup{UC}}}}
%
% Revised
%\newcommand{\UCb}{{\textrm{\ensuremath{\textup{UC}_{\mathbf{B}}}}}}
%
% The first referee suggests ID_B, but what about the following?
%
\newcommand{\UCb}{{\textrm{\ensuremath{\textup{BC}}}}}
\newcommand{\UCB}{{\textrm{\ensuremath{\textup{UCB}}}}}

\newcommand{\BCN}{{\textrm{\textup{BC-N}}}}
\newcommand{\CN}{{\textrm{\textup{C-N}}}}
\newcommand{\WCN}{{\textrm{\textup{WC-N}}}}
\newcommand{\PCN}{{\textrm{\textup{PC-N}}}}

\newcommand{\AC}{{\textrm{\ensuremath{\textrm{AC}_{01}}}}}
\newcommand{\ACUni}{{\textrm{\ensuremath{\textrm{AC}_{10}!}}}}
\newcommand{\LLPO}{{\textrm{\textup{LLPO}}}}
\newcommand{\LPO}{{\textrm{\textup{LPO}}}}

\newcommand{\HAw}{{\textrm{\ensuremath{\textrm{HA}^{\omega}}}}}
\newcommand{\IDB}{{\textrm{\ensuremath{\textrm{IDB}_{1}}}}}

\newcommand{\defeqiv}{\mathrel{\stackrel{\textup{def}}{\iff}}}
\newcommand{\defeql}{\mathrel{\stackrel{\textup{def}}{=}}}
\newcommand{\dotminus}{\mathbin{\ooalign{\hss\raise.5ex\hbox{$\cdot$}\hss\crcr$-$}}}

\newcommand{\imp}{\mathrel{\rightarrow}}
\DeclareMathOperator{\sg}{\mathrm{sg}}

\newcommand{\Cons}[2]{#1 * \langle #2 \rangle}
\newcommand{\nil}{\langle \rangle}

% Theorem numbering
\newtheorem{theorem}{Theorem}[section]
\newtheorem{proposition}[theorem]{Proposition}
\newtheorem{lemma}[theorem]{Lemma}

\theoremstyle{definition}

\theoremstyle{remark}
\newtheorem{remark}[theorem]{Remark}

\numberwithin{equation}{section}

%\title{Bar inductions and continuity on Baire space}
%\author{
%  Tatsuji Kawai \\[.5em]
%  \texttt{tatsuji.kawai@math.unipd.it} \\[.5em]
%    Dipartimento di
%    Matematica\\Universit\`{a}
%    di Padova
%  via Trieste 63  35121 Padova  Italy}
%  %\ead[url]{+81 70 5663 8019}
%  %\address{
%\date{\today}

\begin{document}

\begin{abstract}
 Brouwer-operations, also known as inductively defined neighbourhood
 functions, provide a good notion of continuity on Baire space
 which naturally extends that of uniform continuity on Cantor space.
 In this paper, we introduce a continuity principle for Baire
 space which says that every pointwise continuous function from Baire
 space to the set of natural numbers is induced by a
 Brouwer-operation.

 Working in Bishop constructive mathematics, we show that the 
 above principle is equivalent to a version of
 bar induction  whose strength is between that of the
 monotone bar induction and the decidable bar induction. We also show
 that the monotone bar induction and the decidable bar induction can
 be characterised by similar principles of continuity.
% These characterisations
% give new proof of decomposition of bar Continuity into
% continuous choices and bar inductions.

 Moreover, we show that the $\Pi^{0}_{1}$ bar induction in general
 implies $\LLPO$ (the lesser limited principle of omniscience).  This,
 together with a fact that the $\Sigma^{0}_{1}$ bar induction implies
 $\LPO$ (the limited principle of omniscience), shows that
 an intuitionistically acceptable form of bar induction requires
 the bar to be monotone.

\end{abstract}
% Keywords: 
% Constructive mathematics
% bar induction
% Continuity theorem
% Baire space
\maketitle

\section{Introduction}\label{sec:Introduction}
The uniform continuity principle ($\UC$) is the following statement:
\begin{description}\label{eq:UC}
  \item[\UC] Every pointwise continuous function 
    $F \colon \Bin^{\Nat} \to \Nat$ is uniformly continuous.
\end{description}
In classical mathematics, the above statement is true
because Cantor space $\Bin^{\Nat}$ is topologically compact. This is not the case
in Bishop constructive mathematics \cite{Bishop-67}. In fact, $\UC$
implies the decidable version of Brouwer's fan theorem to which there
is a well-known recursive counterexample (see Troelstra and van Dalen \cite[Chapter 4,
Section 7.6]{ConstMathI}). Here, the fan theorem is a statement saying
that every bar of Cantor space is uniform (see Section
\ref{sec:CBI} for terminology).

%If Cantor space $\Bin^{\Nat}$ is compact, i.e. every open cover
%has a finite subcover, then $\UC$ holds. In particular, $\UC$ holds
%in the usual mathematics practiced in $\ZFC$, but it is not
%provable in a mathematics which is compatible with the assumption
%that every function on the set of natural numbers is recursive (See
%\cite{KleeneVesley,ConstMathI}).

The connection between $\UC$ and the fan theorem
%\cite{BergerFANandUC,BergerUCandcFT,VarietiesConstMath}
is well studied in constructive reverse mathematics
\cite{ConstRevMatheCompactness}.
It is well known that the fan theorem is equivalent to compactness of Cantor
space \cite[Chapter 4, Section 6]{ConstMathI}, and hence it implies $\UC$.
Josef Berger \cite{BergerFANandUC} showed that a weaker version of $\UC$ is
equivalent to the decidable fan theorem (see also Remark
\ref{rem:BergerdFTUC}). In another paper \cite{BergerUCandcFT}, he
also introduced a variant of fan theorem, called $\cFT$, and showed
that it is equivalent to $\UC$. 

In this paper, we establish analogous correspondence between several
notions of continuity on Baire space $\Nat^{\Nat}$ and a variety of bar
induction. % under formal system IDB with assumption of unique choice.
Our focus is on the relation between various versions of bar induction
and statements similar to $\UC$, but we consider
functions on Baire space instead of Cantor space and replace uniform
continuity with a suitable notion of continuity on Baire space.  More
precisely, we consider a function from $\Nat^{\Nat}$ to $\Nat$ induced
by a \emph{Brouwer-operation} (Kreisel and Troelstra \cite[Section 3]{KreiselTroelstra}) to
be a fundamental notion of continuity on Baire space. The notion can
be considered as a natural generalisation of that of uniform continuity on
Cantor space to the setting of Baire space, since it becomes
equivalent to uniform continuity when restricted to Cantor space (see
Proposition \ref{prop:UniReal}).

We now summarise our main contributions.
First, we formulate a continuity principle for Baire
space, called \emph{the principle of Brouwer continuity} ($\UCb$), based on the notion of Brouwer-operation.  The
principle $\UCb$ states that every pointwise continuous function from
Baire space to the set of natural numbers is induced by a Brouwer-operation.
Then, we introduce a variant of bar induction, called \emph{the
continuous bar induction} ($\cBI$), and show
that $\cBI$ is equivalent to
$\UCb$. Moreover, we characterise the other versions of bar
induction, the monotone bar induction and the decidable bar induction, by
a stronger and a weaker version of $\UCb$ by
varying the strength of the premise of $\UCb$.
%Various bar inductions
%are thus characterised by the notion of
%continuity based on Brouwer-operations.
Finally, we show that the $\Pi^{0}_{1}$ bar induction (of which $\cBI$
is an instance) in general implies the non-constructive principle $\LLPO$
(the lesser limited principle of omniscience), and thus intuitionistically
unacceptable.  

The relation between several versions of bar induction and continuity
axioms (namely strong and weak continuity for numbers, and bar
continuity) has been extensively studied by Howard and Kreisel
\cite{HowardKreisel} and Kreisel and Troelstra
\cite{KreiselTroelstra}. Some of their results are recalled as
corollaries of our work in Section \ref{sec:PCN} (Theorem
\ref{thm:BarCont}). Our main contribution is in introducing the bar
induction $\cBI$ which is equivalent to $\UCb$ and characterising
the other versions of bar induction by similar principles of
continuity. In this way, the difference between various versions of bar
induction can be understood as the difference between the notions of
continuity involved in the corresponding principles of continuity.
%In addition, our work can be seen as a contribution to the classification
%of bar inductions in the constructive reverse mathematics proposed by
%\citet{ConstRevMatheCompactness}.

\subsubsection*{Formal system}
We work in Bishop constructive mathematics \cite{Bishop-67}.
However, our work should be formalisable in a suitable extension of
intuitionistic arithmetic in all finite types ($\HAw$), which we
now briefly describe.

First, the language of $\HAw$ is extended with the types of boolean
$\Bin$ and finite sequences $\Bin^{*}$ and $\Nat^{*}$ of $\Bin$ and
$\Nat$ respectively, together with appropriate constructors and axioms
for these types.
Second, we assume the following choice axioms:
\begin{description}
  \item[\AC] $\left( \forall x \in \Nat \right)\left( \exists \alpha
    \in \Nat^{\Nat} \right) A(x,\alpha) \imp \left( \exists F \in
    (\Nat^{\Nat})^{\Nat}\right)\left( \forall x \in \Nat \right) A(x,
    F(x))$.

  \item[\ACUni] 
    $\left( \forall\alpha \in \Nat^{\Nat} \right)\left( \forall x \in
    \Nat \right) \neg B(\alpha,x) \vee B(\alpha,x) \\
    \imp
    \left[
    \left( \forall \alpha \in \Nat^{\Nat} \right)\left(
    \exists ! x  \in \Nat \right) B(\alpha, x)
    \imp 
    \left( \exists F \in \Nat^{(\Nat^{\Nat})} \right)
    \left( \forall \alpha \in \Nat^{\Nat} \right) 
     B(\alpha, F(\alpha)) \right]$.
\end{description}
Moreover, we add a predicate symbol $K$ on $\Nat^{\Nat^{*}}$ together with 
the following axioms (for the notation used, see the next subsection):
\begin{enumerate}[{K}1]
  \item\label{K1} $\lambda a. x + 1 \in K$,

  \item\label{K2} $\left[\alpha(\langle \rangle) = 0 \wedge \left( \forall x \in
    \Nat \right) \lambda a. \alpha(\langle x \rangle * a) \in K
  \right] \imp \alpha \in K$,

  \item\label{K3} $\left( \forall \alpha \in \Nat^{\Nat^{*}} \right)
    \left[ A(Q,\alpha) \imp Q(\alpha) \right] \imp K \subseteq Q$,
\end{enumerate}
where
\[
  A(Q,\alpha) \defeqiv \left( \exists x \in \Nat \right)
  \left[ \alpha = \lambda a. x+1 \right] \vee \left[ \alpha(\langle
  \rangle) = 0 \wedge
  \left( \forall x \in \Nat \right)\lambda a.\alpha(\langle x \rangle * a)
   \in Q\right].
\]
The predicate $K$ can be understood as being inductively defined
by \ref{K1} and \ref{K2}.

The system described above can be thought of as an extension
of the intuitionistic theory of analysis $\IDB$
described in Kreisel and Troelstra \cite{KreiselTroelstra} to all finite types, together
with the axiom of unique choice $\ACUni$. See
Troelstra and van Dalen \cite{ConstMathI,ConstMathII} for the details of the systems $\HAw$
and $\IDB$. 

\subsubsection*{Notation}
We adopt the following notation in this paper.
The letters $k,n,m,x,y$ range over natural numbers $\Nat$.
The letters $a,b$ range over the finite
sequences $\Nat^{*}$ of natural numbers or the finite binary sequences
$\Bin^{*}$.  Greek letters $\alpha,\beta,\gamma,\dots$ range over the
infinite sequences $\Nat^{\Nat}$  or $\Bin^{\Nat}$. 
We write $|a|$ for the length of $a$ and $a * b$ for the
concatenation of $a$ and $b$.
We write $\nil$  and $\langle n \rangle$ for the empty sequence
and a sequence of length $1$. We write $a \preccurlyeq b$ to
mean that $a$ is an initial segment of $b$.
Moreover, we write $\overline{\alpha}k$ for the initial segment of $\alpha$
of length $k$, and we let $\alpha \in a$  abbreviate
$\overline{\alpha}|a| = a$. 
We extend concatenation between finite sequences to the one between
finite sequences and infinite sequences by letting $a * \alpha$
denote the sequence
such that $a * \alpha \in a$ and $\left( \forall n \in \Nat\right)
n \geq |a| \imp a * \alpha(n) = \alpha(n \dotminus |a|)$.

We let $A,B,C,\dots$ range over the formulas of our system.
By a predicate of type $\mathbb{T}$, we mean a formula $A$ of our
system with a free variable of type $\mathbb{T}$. In this case, we write $A \subseteq
\mathbb{T}$. For predicates $A,B \subseteq \mathbb{T}$, we let $A
\subseteq B$ abbreviate $\left( \forall t \in \mathbb{T} \right) A(t)
\imp B(t)$. We sometimes write $t \in A$ for $A(t)$.

\section{Continuous bar induction}\label{sec:CBI}
We introduce the principle $\cBI$, the continuous bar
induction, and argue that $\cBI$ naturally extends the fan theorem
$\cFT$ by Berger \cite{BergerUCandcFT}.

A predicate $P \subseteq \Nat^{*}$ is a \emph{bar} if
\begin{equation*}
\left(\forall \alpha \in \Nat^{\Nat}  \right) 
\left( \exists n \in \Nat \right) P(\overline{\alpha}n).
\end{equation*}
A bar $P$ is a \emph{c--bar} if there exists a function $\delta \colon
\Nat^{*} \to \Nat$ such that 
    \[
    \left(\forall a \in \Nat^{*}  \right)
    \left[P(a) \leftrightarrow 
    \left( \forall b \in \Nat^{*} \right) \delta(a) = \delta(a * b)
    \right].
    \]
A predicate $Q \subseteq \Nat^{*}$ is \emph{inductive} if 
\[
  \left( \forall a \in \Nat^{*} \right) \left[ \left( \forall n \in \Nat
  \right) Q(a * \langle n \rangle) \rightarrow Q (a) \right].
\]

\emph{The continuous bar induction} ($\cBI$) is the following statement:
\begin{description}
  \item[\cBI] For any c--bar $P \subseteq \Nat^{*}$ and a predicate $Q \subseteq
    \Nat^{*}$, if $P \subseteq Q$ and $Q$ is inductive, then
    $Q(\langle \rangle)$.
\end{description}
%In Section \ref{sec:BIandUC}, we compare the strength of $\cBI$ with
%respect to the other bar inductions.

%Let $\mBI$ and $\dBI$
%We have $\mBI \Rightarrow \cBI \Rightarrow \dBI$, where $\mBI$ is the
%monotone bar induction and $\dBI$ is the bar induction for decidable
%bars See Section \ref{sec:BIandUC} for 
%definitions of these principles).
%%
%In fact, the first implication follows from the fact that every c--bar
%is monotone. For the second implication, given any decidable bar $P$,
%define a function $\delta \colon \Nat^{*} \to \Nat$ by:
%\[
%  \delta(a) \defeql \begin{cases}
%    1&\text{if $\exists a' \preccurlyeq a P(a')$},\\
%    0&\text{otherwise}.
%  \end{cases}
%\]
%%
%Now, define a c--bar $P'$ by $P'(a) \defeql \left(
%\forall b \in \Nat^{*} \right) \delta(a) = \delta(a*b)$, and apply
%$\cBI$ to obtain the conclusion of $\dBI$. The detail is similar
%to the proof of $\mBI \Rightarrow \dBI$ (See \citet[Chapter 4,
%Excercise 4.8.4]{ConstMathI}).
%
%
In the rest of this section, we relate $\cBI$ to the fan theorem
$\cFT$.

We recall the standard terminology.
If $P$ and $Q$ are predicates of some type $\mathbb{T}$
such that $P \subseteq Q$,
we say that $P$ is \emph{detachable} from $Q$ if
\[
  \left( \forall t \in \mathbb{T} \right) Q(t) \imp \neg P(t) \vee P(t).
\]
A predicate $C \subseteq \Bin^{*}$ is a \emph{c--set} if there exists a
detachable predicate $D \subseteq \Bin^{*}$ such that
\[
  \left( \forall a \in \Bin^{*} \right)\Bigl[ C(a) \leftrightarrow \left(
  \forall b
  \in \Bin^{*} \right) D(a * b)  \Bigr].
\]
A predicate $P \subseteq \Bin^{*}$ is a \emph{bar} of the binary tree $\Bin^{*}$ if
\[
  \left( \forall \alpha \in \Bin^{\Nat} \right)\left( \exists n \in
  \Nat \right)P(\overline{\alpha}n).
\]
A bar $P \subseteq \Bin^{*}$ is \emph{uniform} if
\[
  \left( \exists N \in \Nat \right)\left( \forall \alpha \in
  \Bin^{\Nat} \right)\left( \exists n \leq N \right)
  P(\overline{\alpha}n).
\]
The principle $\cFT$ is the following statement \cite{BergerUCandcFT}:
\begin{description}
  \item[\cFT] Every bar $P \subseteq \Bin^{*}$ that is a c--set is
    uniform. 
\end{description}

\begin{proposition}
  \leavevmode
  \begin{enumerate}
    \item\label{prop:cbar}
      Let $P \subseteq \Bin^{*}$ be a bar for the binary
      sequences. Then, $P$ is a c--set
      if and only if there exists a function $\delta \colon \Bin^{*}
      \to \Nat$ such that
      \begin{equation}\label{prop:eq:cbar}
        \left( \forall a \in \Bin^{*} \right) \Bigl[ P(a) \leftrightarrow
        \left( \forall b \in \Bin^{*} \right) \delta(a) =
        \delta(a*b) \Bigr].
      \end{equation}
    \item\label{prop:cBIcFT} $\cBI \implies \cFT$.
  \end{enumerate}
\end{proposition}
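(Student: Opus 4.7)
For part (1), I construct each implication by exhibiting an explicit witness. For the $(\Leftarrow)$ direction, given $\delta$ satisfying \ref{prop:eq:cbar}, define the detachable predicate
\[
D(a) \defeqiv \bigl[\delta(a) = \delta(a * \langle 0 \rangle)\bigr] \wedge \bigl[\delta(a) = \delta(a * \langle 1 \rangle)\bigr].
\]
A straightforward induction on $|b|$ shows that $(\forall b \in \Bin^{*})\, D(a*b)$ is equivalent to $\delta$ being constant on the subtree of extensions of $a$, which by hypothesis coincides with $P(a)$.

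For the $(\Rightarrow)$ direction, assume $P$ is a c--set witnessed by the detachable $D$. Define
\[
\delta(a) = \bigl|\{c \in \Bin^{*} : c \prec a \wedge \neg D(c)\}\bigr|,
\]
which is computable since $D$ is detachable. A direct count gives $\delta(a*b) - \delta(a) = |\{c : a \preccurlyeq c \prec a*b, \neg D(c)\}|$, so $(\forall b \in \Bin^{*})\,\delta(a) = \delta(a*b)$ holds if and only if $D(c)$ for every $c \succeq a$, which is precisely $(\forall b \in \Bin^{*})\,D(a*b) \equiv P(a)$.

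For part (2), let $P \subseteq \Bin^{*}$ be a bar that is a c--set, and fix $\delta$ from part (1). I lift the data to $\Nat^{*}$ by defining $\tilde\delta \colon \Nat^{*} \to \Nat$ as $\tilde\delta(a) = \delta(a')$, where $a'$ is the prefix of $a$ cut off at the first entry $\geq 2$ (or $a' = a$ if $a \in \Bin^{*}$), and setting $\tilde P(a) \defeqiv (\forall b \in \Nat^{*})\, \tilde\delta(a) = \tilde\delta(a*b)$. Then $\tilde P$ is a c--bar by construction; it coincides with $P$ on $\Bin^{*}$ and holds vacuously when $a \notin \Bin^{*}$. To verify that $\tilde P$ is a bar on $\Nat^{\Nat}$, for $\alpha \in \Nat^{\Nat}$ apply the $P$-bar property to the binary approximation $\alpha'$ that copies $\alpha$ up to the first entry $\geq 2$ and is $0$ thereafter; a bounded search on the resulting witness $n_0$ then yields a concrete $n$ with $\tilde P(\overline{\alpha}n)$.

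Finally, define $Q \subseteq \Nat^{*}$ by
\[
Q(a) \defeqiv (a \notin \Bin^{*}) \vee (\exists N \in \Nat)(\forall b \in \Bin^{*})\bigl[|b| = N \imp (\exists b' \preccurlyeq b)\,P(a*b')\bigr].
\]
Then $\tilde P \subseteq Q$ is immediate, and $Q$ is inductive: for $a \in \Bin^{*}$, given $Q(a*\langle 0 \rangle)$ and $Q(a*\langle 1 \rangle)$ with bounds $N_0, N_1$, the bound $N = \max(N_0, N_1) + 1$ works for $Q(a)$ (split any $b \in \Bin^{*}$ of length $N$ as $\langle j \rangle * b''$ and apply $Q(a * \langle j \rangle)$); for $a \notin \Bin^{*}$, $Q(a)$ holds vacuously. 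Applying $\cBI$ to $\tilde P$ and $Q$ then yields $Q(\nil)$, which is precisely the uniformity conclusion of $\cFT$ for $P$. The main obstacle is the constructive verification that $\tilde P$ is a bar on $\Nat^{\Nat}$, since we cannot decide whether an arbitrary $\alpha$ is binary; this forces us to combine the $P$-bar witness for the binary approximation $\alpha'$ with a bounded search over prefixes of $\alpha$ for the first non-binary entry.
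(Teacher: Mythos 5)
Your proof is correct, but it departs from the paper's in two places, and the comparison is instructive. In part (1), your ($\Leftarrow$) direction is the paper's verbatim (the detachable $D(a) \defeqiv \delta(a)=\delta(a*\langle 0\rangle)=\delta(a*\langle 1\rangle)$), but your ($\Rightarrow$) direction is genuinely different: the paper takes $\delta$ to be the $0/1$ indicator of $D$ and then must invoke the hypothesis that $P$ is a bar (applied along $a*0^{\omega}$) to exclude the case that $\delta$ is constantly $0$ above $a$, whereas your prefix-counting $\delta(a)=|\{c \prec a : \neg D(c)\}|$ makes constancy of $\delta$ above $a$ literally equivalent to $\left(\forall b\right) D(a*b)$, so the bar hypothesis is never used and you in fact prove the equivalence for arbitrary c--sets. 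In part (2), the paper transports $C$ to Baire space along the retraction $\Gamma(\alpha)=\lambda n.\sg(\alpha(n))$, which makes the bar property of the pulled-back predicate immediate (every $\Gamma(\alpha)$ is already binary) and needs no case distinctions; you instead extend $\delta$ and $P$ by truncating a finite sequence at its first entry $\geq 2$, so that $\tilde P$ holds automatically at non-binary nodes, and you then pay for this with the case analysis verifying that $\tilde P$ bars $\Nat^{\Nat}$ (decide whether $\overline{\alpha}n_{0}$ is binary, where $n_{0}$ is the $P$-bar witness for the binary approximation $\alpha'$, and otherwise stop at the first non-binary entry) and with the extra disjunct ``$a \notin \Bin^{*}$'' in $Q$. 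Both routes are constructively sound since binarity of a finite sequence is decidable; the paper's retraction is slicker and keeps $Q$ in the same shape as the $\cFT$ conclusion over all of $\Nat^{\Nat}$, while your version makes the role of non-binary nodes explicit and phrases $Q$ via a uniform length bound over binary extensions. Only cosmetic nits: ``vacuously'' should be ``trivially'' for $\tilde P(a)$ at non-binary $a$, and in the inductive step for $Q$ the split $b=\langle j\rangle *b''$ gives $|b''|\geq N_{j}$ rather than $=N_{j}$, so one should pass to the length-$N_{j}$ prefix of $b''$ (or note the $N$-property is monotone in $N$) --- both are immediate.
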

\begin{proof}
  \eqref{prop:cbar}
  ($\Rightarrow$)
  Suppose that $P$ is a c-set that is a bar. Let $D \subseteq
  \Bin^{*}$ be a detachable predicate such that
  \[
    \left( \forall a \in \Bin^{*} \right)
    \left[ P(a) \leftrightarrow \left( \forall b \in \Bin^{*} \right)
    D(a*b)\right] .
  \]
  Define a function $\delta \colon \Bin^{*} \to \Nat$ by 
  \[
    \delta(a) \defeql
    \begin{cases}
      1 &\text{if $D(a)$},\\
      0 &\text{otherwise}.
    \end{cases}
  \]
  Let $a \in \Bin^{*}$, and suppose that $\left( \forall b \in \Bin^{*} \right)
  \delta(a) = \delta(a * b)$.
  Since $P$ is a bar, there exists $n \in \Nat$ such that
  $P(\overline{a*0^{\omega}}n)$, where 
 \[
   0^{\omega} \defeql \lambda x. 0. 
 \]
  Then, obviously 
  $\delta(a) = \delta(\overline{a*0^{\omega}}n) = 1$.
  Hence $P(a)$.  The converse $P(a) \imp \left( \forall b  \in \Bin^{*} \right)
  \delta(a) = \delta(a * b)$  is obvious.

  ($\Leftarrow$)
  Let $\delta \colon \Bin^{*} \to \Nat$
  be a function that satisfies the condition \eqref{prop:eq:cbar}. Define a detachable predicate
  $D \subseteq  \Bin^{*}$ by
  \[
    D(a) \defeqiv \delta(a) = \delta(\Cons{a}{0}) = \delta(\Cons{a}{1}).
  \]
  Obviously we have $P(a) \leftrightarrow \left( \forall b \in \Bin^{*} \right) D(a*b)$.

  \eqref{prop:cBIcFT} Assume $\cBI$.
  Let $C \subseteq \Bin^{*}$ be a c-set which is a bar of the binary
  tree.
  By the first part of this proposition, there exists a function $\delta \colon \Bin^{*} \to \Nat$
  such that 
  \[
    C(a) \leftrightarrow \left( \forall b \in \Bin^{*} \right)
    \delta(a) = \delta(a * b).
  \]
  Define a function $\Gamma \colon \Nat^{\Nat} \to \Bin^{\Nat}$ by
  \[
    \Gamma(\alpha) \defeql \lambda n. \sg(\alpha(n)),
  \]
  where $\sg(n) \defeql \min(1,n)$.
  Similarly, we define $\Gamma^{*} \colon \Nat^{*} \to \Bin^{*}$.
  Since $C$ is a bar, we have
  $\left( \forall \alpha \in \Nat^{\Nat} \right)\left(
  \exists n \in \Nat \right) C(\overline{\Gamma(\alpha)}n)$, i.e.\
  $\left( \forall \alpha \in \Nat^{\Nat} \right)\left(
  \exists n \in \Nat \right) C(\Gamma^{*}(\overline{\alpha}n))$.
  Define a predicate $P \subseteq \Nat^{*}$ and a function $\varepsilon \colon
  \Nat^{*} \to \Nat$  by
  \begin{align*}
    P(a) &\defeqiv C(\Gamma^{*}(a)), \\
    \varepsilon(a) &\defeql \delta(\Gamma^{*}(a)).
  \end{align*}
  Then, $P(a) \leftrightarrow \left( \forall b \in \Nat^{*}\right)
  \varepsilon(a) = \varepsilon(a * b)$, so $P$ is a c--bar.
  Define a predicate $Q \subseteq \Nat^{*}$ by
  \[
    Q(a) \defeqiv \left( \exists N \in \Nat \right)\left( \forall
    \alpha \in \Nat^{\Nat}\right)\left( \exists n \leq N \right) P(a *
    \overline{\alpha} n).
  \]
  Clearly, $P \subseteq Q$. Let $a \in \Nat^{*}$ and suppose that
  $\left( \forall n \in \Nat \right)Q(\Cons{a}{n})$.
  Then, there exists $N \in \Nat$ such that for each $i \in \{0,1\}$, 
  \[
    \left( \forall \alpha \in \Nat^{\Nat}\right) \left( \exists n \leq
    N \right) P(\Cons{a}{i} * \overline{\alpha} n). 
  \]
  From the definition of $P$, we see that $Q(a)$. Thus, $Q$ is
  inductive. Applying $\cBI$, we obtain $Q(\nil)$, which implies
  \[
    \left( \exists N \in \Nat \right)\left( \forall \alpha \in
    \Bin^{\Nat}\right) \left( \exists n \leq
    N \right) C(\overline{\alpha} n). \qedhere
  \]
\end{proof}
Thus, we can think of $\cBI$ as a generalisation of $\cFT$ to
Baire space.
\section{The principle of Brouwer continuity}\label{sec:UCb}
We recall the notion of Brouwer-operation
from Kreisel and Troelstra \cite{KreiselTroelstra}, which allows us to give a constructive
notion of continuity on Baire space which naturally extends
the notion of uniform continuity on Cantor space.

The predicate $K \subseteq \Nat^{\Nat^{*}}$ of \emph{Brouwer-operations}
is inductively defined by the following clauses:
\begin{gather}\label{def:K}
  \frac{n \in \Nat}{\lambda a. n + 1 \in K},  \qquad
  \frac{\gamma(\langle \rangle)  = 0 \quad \left(\forall n \in \Nat  \right) \lambda a.
  \gamma(\langle n \rangle * a) \in K}{\gamma \in K}.
\end{gather}
Formally, we assume the existence of a predicate $K$ satisfying the
axioms \ref{K1} -- \ref{K3}; see Introduction \ref{sec:Introduction}.
If a Brouwer-operation $\gamma \in K$ is introduced by the second
clause, we write $\sup_{n \in \Nat} \gamma_{n}$ for $\gamma$, where
\[
  \gamma_{n} \defeql \lambda a. \gamma(\langle n \rangle * a).
\]

%A Brouwer-operation $\gamma \in K$ represents a well-founded tree
%\begin{equation}\label{eq:BarAlpha}
%  P_{\gamma} \defeql \left\{ a \in \Nat^{*} \mid  \gamma(a) > 0 \wedge
%\left( \forall a' \prec a \right) \gamma(a') = 0 \right\}
%\end{equation}
%whose leaves are labelled by positive natural numbers.
%Note that $P_{\gamma}$ is a bar of Baire space.

Let $K_{0}$ be a predicate on $\Nat^{\Nat^{*}}$ defined by 
\begin{multline*}
   K_{0}(\gamma) \defeqiv \left( \forall \alpha \in \Nat^{\Nat}
  \right)\left( \exists n \in \Nat \right) \gamma(\overline{\alpha}n)
  > 0 \mathrel{\wedge}\\
  \left( \forall a,b \in \Nat^{*} \right)\bigl[ \gamma(a) > 0 \rightarrow
  \gamma(a) = \gamma(a*b)  \bigr].
\end{multline*}
An element of $K_{0}$ is called a \emph{neighbourhood
function}. Note that every Brouwer-operation is a neighbourhood function.
\begin{lemma}\label{lem:BwOpisNhood}
  $K \subseteq K_{0}$.
\end{lemma}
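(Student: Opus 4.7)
The natural strategy is to prove the inclusion by induction on $K$, using axiom \ref{K3} with $Q$ taken to be $K_{0}$. Thus I would show $A(K_{0},\gamma) \imp K_{0}(\gamma)$ for every $\gamma \in \Nat^{\Nat^{*}}$ and then appeal to \ref{K3}.

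For the base case, suppose $\gamma = \lambda a.\, n+1$ for some $n \in \Nat$. Then $\gamma(a) = n+1 > 0$ for every $a \in \Nat^{*}$, so for any $\alpha \in \Nat^{\Nat}$ we have $\gamma(\overline{\alpha}0) > 0$, and $\gamma(a) = n+1 = \gamma(a*b)$ holds for all $a,b$. Both conjuncts defining $K_{0}$ are immediate.

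For the inductive step, suppose $\gamma(\nil) = 0$ and $\gamma_{n} \in K_{0}$ for every $n$, where $\gamma_{n} = \lambda a.\,\gamma(\langle n \rangle * a)$. To verify the first conjunct of $K_{0}$, fix $\alpha \in \Nat^{\Nat}$ and let $\alpha' = \lambda k.\,\alpha(k+1)$; applying the induction hypothesis to $\gamma_{\alpha(0)}$ yields $m$ with $\gamma_{\alpha(0)}(\overline{\alpha'}m) > 0$, which is exactly $\gamma(\overline{\alpha}(m+1)) > 0$. To verify the second conjunct, suppose $\gamma(a) > 0$; since $|a|$ is a natural number, we may decide whether $a = \nil$ or $a = \langle k \rangle * a'$ for some $k$ and $a'$. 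The former case is impossible by the assumption $\gamma(\nil) = 0$, so $a = \langle k \rangle * a'$, and then $\gamma_{k}(a') = \gamma(a) > 0$; applying the induction hypothesis to $\gamma_{k}$ gives $\gamma_{k}(a') = \gamma_{k}(a' * b)$ for every $b$, i.e.\ $\gamma(a) = \gamma(a * b)$.

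Combining both cases gives $A(K_{0},\gamma) \imp K_{0}(\gamma)$, and \ref{K3} then delivers $K \subseteq K_{0}$. The only delicate point is the empty/non-empty case split in the monotonicity clause, which is unproblematic because it reduces to deciding whether $|a| = 0$, a decidable predicate on $\Nat$; everything else is direct unwinding of the two clauses defining $K$ and $K_{0}$.
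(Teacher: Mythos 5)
Your proof is correct and is essentially the paper's own argument: the paper proves $K \subseteq K_{0}$ by induction on $K$ (deferring details to Troelstra and van Dalen, Chapter 4, Proposition 8.5), and your application of axiom K3 with $Q = K_{0}$, together with the base and successor cases you verify, is exactly that standard induction. The case split on whether $a$ is empty is indeed unproblematic constructively, since it is a decidable equality of natural numbers.
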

\begin{proof}
  The proof is by induction on $K$.
  Details can be found in Troelstra and van Dalen \cite[Chapter 4, Proposition 8.5]{ConstMathI}.
\end{proof}
The converse of Lemma \ref{lem:BwOpisNhood} does not necessarily
hold; see Lemma \ref{lem:KK0equivdBI}.

By $\ACUni$, every neighbourhood function $\gamma \in K_{0}$
determines a function
$F_{\gamma} \colon \Nat^{\Nat} \to \Nat$ by
\begin{equation}\label{eq:BrouwrCont}
  F_{\gamma}(\alpha) \defeql \gamma\left(
  \overline{\alpha}\min_{z \in \Nat}\left[\gamma(\overline{\alpha}z) > 0
  \right] \right) \dotminus 1.
\end{equation}
A function $F \colon \Nat^{\Nat} \to \Nat$ is
\emph{$K_{0}$-realisable} if there exists a neighbourhood function
$\gamma \in K_{0}$ such that $F_{\gamma} = F$.  Similarly, a function $F
\colon \Nat^{\Nat} \to \Nat$ is said to be \emph{$K$-realisable} if there exists
a Brouwer-operation $\gamma \in K$ such that $F_{\gamma} = F$.  In
both cases, we say that $\gamma$ \emph{realises} $F$ and write $\gamma
\Vdash F$.

We now formulate a continuity principle for Baire space, called
\emph{the principle of Brouwer continuity}
($\UCb$):\footnote{The principle $\UCb$ is called $\UCB$ in
  \cite{KawaiFContOnBaire}.}
\begin{description}
  \item[\UCb] Every pointwise continuous function 
    $F \colon \Nat^{\Nat} \to \Nat$ is $K$-realisable.
\end{description}
Here, recall that a function $F \colon \Nat^{\Nat} \to \Nat$ is pointwise continuous if
\[
  \left( \forall \alpha \in \Nat^{\Nat} \right)\left( \exists n \in
  \Nat \right) \left( \forall \beta \in \Nat^{\Nat} \right)
  \overline{\beta}n  = \overline{\alpha}n  \imp F(\beta) = F(\alpha).
\]
The following argument highlights the difference between pointwise
continuity and realisability by neighbourhood functions.
Let $K_{1}$ be a predicate on $\Nat^{\Nat^{*}}$ defined by 
\[
  K_{1}(\delta)
  \defeqiv
  \left( \forall \alpha \in \Nat^{\Nat} \right)
  \left( \exists n \in \Nat \right)
  \left( \forall a \in \Nat^{*} \right)
  \delta(\overline{\alpha}n) = \delta(\overline{\alpha}n * a).
\]
Note that $K_{0} \subseteq K_{1}$, and the predicate $K_{1}$ represents
the class of c--bars. 

Every function $\delta \in K_{1}$ determines a pointwise continuous
function $F_{\delta} \colon \Nat^{\Nat} \to \Nat$ in the following
way.
For each $\alpha \in \Nat^{\Nat}$,  define
\begin{equation}\label{eq:D}
   D_{\alpha} \defeql \left\{ m \in \Nat \mid
     \delta(\overline{\alpha}m) \neq
     \delta(\overline{\alpha}(m+1))\right\} \cup \{1\}.
\end{equation}
Then, $D_{\alpha}$ is bounded because $\delta$ determines a c--bar.
By $\ACUni$, defined a function $F_{\delta} \colon \Nat^{\Nat} \to \Nat$
by
\begin{equation*}\label{eq:G}
  F_{\delta}(\alpha) \defeql \delta(\overline{\alpha}(\max D_{\alpha} + 1)).
\end{equation*}
To see that $F_{\delta}$ is pointwise continuous, let $\alpha \in
\Nat^{\Nat}$. Then, there exists $n \in \Nat$ such that $\left
( \forall a \in \Nat^{*} \right)\delta(\overline{\alpha}n) =
\delta(\overline{\alpha}n * a)$.  Then, for any $\beta \in
\overline{\alpha}n$, we have $D_{\beta} = D_{\alpha}$, and so
$F_{\delta}(\alpha) = F_{\delta}(\beta)$. Hence $F_{\delta}$ is
pointwise continuous. Conversely, every  pointwise continuous function
$F \colon \Nat^{\Nat} \to \Nat$ arises in this way from a function $\delta
\in K_{1}$ by setting
\[
  \delta(a) \defeql F(a * 0^{\omega}).
\]

In the rest of this section, we relate $\UCb$ to the uniform continuity
principle $\UC$.

First, we adjust the notion of Brouwer-operation to the functions on
Cantor space.
The predicate $K_{C} \subseteq \Nat^{\Bin^{*}}$ of
\emph{Brouwer-operations} on Cantor space is inductively defined by
the following clauses:
\begin{gather*}
  \frac{n \in \Nat}{\lambda a. n + 1 \in K_C}, \qquad
  \frac{\gamma(\langle \rangle)  = 0 \quad \left(\forall i \in
    \Bin\right) \lambda a.
  \gamma(\langle i \rangle * a) \in K_C}{\gamma \in K_C}.
\end{gather*}
Each Brouwer-operation $\gamma \in K_C$ determines a continuous
function $F_{\gamma} \colon \Bin^{\Nat} \to \Nat$ as in
equation
\eqref{eq:BrouwrCont}. The notion of $K_{C}$-realisable function from
$\Bin^{\Nat}$ to $\Nat$ is similarly defined.

In the following proposition, recall that a function $F \colon
\Bin^{\Nat} \to \Nat$ is \emph{uniformly continuous} if
\begin{equation}\label{def:UCCantor}
  \left( \exists n \in \Nat \right) \left( \forall \alpha,\beta \in
  \Bin^{\Nat} \right)\overline{\alpha} n = \overline{\beta}n \imp F(\alpha) =
  F(\beta).
\end{equation}
\begin{proposition}\label{prop:UniReal}
  A function $F \colon \Bin^{\Nat} \to \Nat$ is uniformly continuous if and only if
  $F$ is $K_{C}$-realisable.
\end{proposition}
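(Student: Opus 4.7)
The plan is to prove both directions by induction on the structure of $K_C$.

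For the $(\Leftarrow)$ direction, assume $\gamma \in K_C$ realises $F$. I will show by induction on $K_C$ the auxiliary claim that every $\gamma \in K_C$ admits a \emph{uniform} modulus: there exists $N \in \Nat$ with $\left(\forall \alpha \in \Bin^{\Nat}\right)\left(\exists n \leq N\right) \gamma(\overline{\alpha}n) > 0$. The base case $\gamma = \lambda a.\, n+1$ is trivial with $N = 0$. For the inductive step, suppose $\gamma(\langle\rangle) = 0$ and $\gamma_i := \lambda a.\,\gamma(\langle i \rangle * a) \in K_C$ for $i \in \Bin$, with moduli $N_0$ and $N_1$ by the induction hypothesis. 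Setting $N := \max(N_0, N_1) + 1$ works: any $\alpha \in \Bin^{\Nat}$ starts with some $i \in \Bin$, and $\gamma_i$ gives an $n \leq \max(N_0, N_1)$ with $\gamma(\overline{\alpha}(n+1)) = \gamma_i(\overline{\alpha \dotminus 1}\, n) > 0$. From such a uniform $N$, the defining formula \eqref{eq:BrouwrCont} shows $F_{\gamma}(\alpha)$ depends only on $\overline{\alpha}N$, hence $F$ is uniformly continuous.

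For the $(\Rightarrow)$ direction, suppose $F$ satisfies \eqref{def:UCCantor} with modulus $N$. Define $\gamma \colon \Bin^{*} \to \Nat$ by
\[
  \gamma(a) \defeql
  \begin{cases}
    0 & \text{if $|a| < N$,} \\
    F(a * 0^{\omega}) + 1 & \text{if $|a| \geq N$.}
  \end{cases}
\]
To see $\gamma \in K_{C}$, I will prove the auxiliary statement that for every $a \in \Bin^{*}$ with $|a| \leq N$, the shifted function $\gamma^{a} := \lambda b.\,\gamma(a * b)$ belongs to $K_{C}$, by induction on $N - |a|$. When $|a| = N$, uniform continuity gives $F((a*b) * 0^{\omega}) = F(a * 0^{\omega})$ for every $b$, so $\gamma^{a}$ is the constant $F(a * 0^{\omega}) + 1$ and lies in $K_{C}$ by the first clause. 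When $|a| < N$, we have $\gamma^{a}(\langle\rangle) = \gamma(a) = 0$ and $\lambda b.\,\gamma^{a}(\langle i \rangle * b) = \gamma^{a * \langle i \rangle} \in K_{C}$ for each $i \in \Bin$ by the induction hypothesis, so the second clause gives $\gamma^{a} \in K_{C}$. Taking $a = \langle\rangle$ yields $\gamma \in K_{C}$.

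Finally, I would check $F_{\gamma} = F$. For any $\alpha \in \Bin^{\Nat}$, the minimum $z$ with $\gamma(\overline{\alpha}z) > 0$ is exactly $N$, so by \eqref{eq:BrouwrCont} and uniform continuity, $F_{\gamma}(\alpha) = \gamma(\overline{\alpha}N) \dotminus 1 = F(\overline{\alpha}N * 0^{\omega}) = F(\alpha)$. There is no serious obstacle: the only mild subtlety is organising the forward direction around the downward induction on $N - |a|$ so that the inductive clauses of $K_{C}$ apply at the correct depths.
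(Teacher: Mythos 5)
Your proof is correct, but it is organised differently from the paper's. For ($\Rightarrow$) the paper inducts on the modulus $n$ itself, with the induction statement ``every $F$ with modulus $n$ is $K_C$-realisable'': the realiser of $F$ is assembled from realisers $\gamma_0,\gamma_1$ of the shifted functions $F_i(\alpha)=F(\langle i\rangle *\alpha)$ supplied by the induction hypothesis. You instead write down one explicit neighbourhood function ($\gamma(a)=0$ for $|a|<N$, $F(a*0^{\omega})+1$ otherwise) and then verify $\gamma\in K_C$ by a separate downward induction on $N-|a|$ over the shifted functions $\gamma^{a}$; this makes the identity $F_{\gamma}=F$ completely transparent, at the cost of the extra bookkeeping with $\gamma^{a}$, and the two constructions in fact produce essentially the same operation. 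For ($\Leftarrow$) the paper inducts on $K_C$ with the statement ``for all $F$, $\gamma\Vdash F$ implies $F$ uniformly continuous'', combining the moduli of $F_0,F_1$ at each step, whereas you first isolate the reusable lemma that every $\gamma\in K_C$ admits a uniform bound $N$ with $\left(\forall\alpha\in\Bin^{\Nat}\right)\left(\exists n\leq N\right)\gamma(\overline{\alpha}n)>0$ (a fan-theorem-style fact about $K_C$ whose induction statement mentions only $\gamma$, not $F$), and then read off uniform continuity of $F_{\gamma}$ from \eqref{eq:BrouwrCont}; that last step is fine because $n\leq N$ forces the minimum in \eqref{eq:BrouwrCont} to be determined by $\overline{\alpha}N$. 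Both arguments go through in the ambient system; only a notational quibble: in your inductive step the argument of $\gamma_i$ should be the tail $\alpha_{\geq 1}=\lambda k.\,\alpha(k+1)$ rather than ``$\alpha\dotminus 1$''.
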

\begin{proof}
  ($\Rightarrow$)
  Define a predicate $A \subseteq \Nat$ by
  \begin{multline*}
    A(n) \defeqiv  \left(\forall F \in \Nat^{\left(\Bin^{\Nat}\right)} \right)
    \Bigl[ \left( \forall \alpha,\beta \in \Bin^{\Nat} \right)
      \left[ 
      \overline{\alpha}n = \overline{\beta}n \imp F(\alpha) = F(\beta)  \right] \Bigr.\\
    \Bigl. \imp \left(
    \exists \gamma \in K_{C} \right) \gamma \Vdash F \Bigr].
  \end{multline*}
  It suffices to show that $A(n)$ for all $n \in \Nat$, which is proved by induction.

  $n = 0$: Then $\lambda a. F(0^{\omega}) + 1$ realises $F$.

  $n = k + 1$: Let $F \colon \Bin^{\Nat} \to \Nat$ be a function such that
  \[
    \left(  \forall \alpha,\beta \in \Bin^{\Nat} \right) \overline{\alpha}n =
    \overline{\beta}n \imp
    F(\alpha) = F(\beta).
  \]
  By induction hypothesis, there exist
  $\gamma_0, \gamma_1 \in K_{C}$ such that for each $i \in \Bin$
  the Brouwer-operation $\gamma_i$ realises a function $F_i \colon \Bin^{\Nat} \to \Nat$
  given by 
  \begin{equation}\label{eq:Fi}
    F_i(\alpha) \defeql F(\langle i \rangle * \alpha).
  \end{equation}
  Define $\gamma \in K_{C}$ by
  $\gamma(\nil) \defeql 0$, and $\lambda a. \gamma(\langle i \rangle *
  a) \defeql
  \gamma_{i}$ for $i \in \Bin$. Let $\alpha \in \Bin^{\Nat}$,
  and put $i = \alpha(0)$. Since $\gamma_{i} \Vdash F_i$, there exists
  $k \in \Nat$ such that $\gamma_{i}(\overline{\alpha_{\geq 1}}k) =
  F_{i}(\alpha_{\geq 1}) + 1$, where $\alpha_{\geq 1} \defeql \lambda
  n. \alpha(n + 1)$. Then $\gamma(\overline{\alpha}(k+1)) = F(\alpha)
  + 1$. Therefore $\gamma$ realises $F$.

  ($\Leftarrow$)
  Suppose that $F$ is realised by $\gamma \in K_{C}$.
  We show by induction on $K_{C}$ that
  \[
    \left( \forall F \in \Nat^{\Bin^{\Nat}} \right) \gamma \Vdash F \imp
    \text{``$F$ is uniformly continuous''},
  \]
  where ``$F$ is uniformly continuous'' is the formula of the form
  \eqref{def:UCCantor}.

  $\gamma = \lambda a. n + 1$: Then $\gamma$ realises the constant
  function $\lambda \alpha. n$, which is uniformly continuous.

  $\gamma(\langle \rangle) = 0 \wedge \left( \forall i  \in \Bin
  \right) \lambda a. \gamma(\langle i \rangle * a) \in K_{C}$:
  Let $F \colon \Bin^{\Nat} \to \Nat$ be a function such that
  $\gamma\Vdash F$. Then, for each $i \in \Bin$ we have 
  $\lambda a. \gamma(\langle i \rangle * a) \Vdash F_i$,
  where $F_i$ is defined as in \eqref{eq:Fi}.
  By induction hypothesis, $F_i$ is uniformly continuous for each $i
  \in \Bin$. Hence $F$ is uniformly continuous.
\end{proof}
\section{Equivalence of $\cBI$ and $\UCb$}\label{sec:EeqivcBI-UCb}
The aim of this section is to prove the following equivalence.
\begin{theorem}\label{thm:EquivcBIUCb}
$\cBI \iff \UCb$.
\end{theorem}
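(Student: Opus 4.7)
The plan is to prove both implications through the translation between c--bar witnesses $\delta \in K_1$ and pointwise continuous functions set up in Section~\ref{sec:UCb}.

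For $\cBI \Rightarrow \UCb$: given a pointwise continuous $F \colon \Nat^\Nat \to \Nat$, set $\delta(a) \defeql F(a * 0^\omega)$ (which lies in $K_1$ as the paper notes) and $P(a) \defeqiv (\forall b \in \Nat^*)\, \delta(a) = \delta(a * b)$, so $P$ is a c--bar. A short argument using the pointwise continuity of $F$ shows that $P(a)$ is in fact equivalent to ``$F$ is constant on the cone above $a$''. Define $Q(a) \defeqiv (\exists \gamma \in K)\, \gamma \Vdash (\alpha \mapsto F(a * \alpha))$. Then $P \subseteq Q$, since a constant function is realised by $\lambda c.\,F(a * 0^\omega) + 1 \in K$ via clause K1. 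To see that $Q$ is inductive, suppose $Q(a * \langle m \rangle)$ holds for each $m$; by $\AC$ pick $\gamma_m \in K$ realising $\alpha \mapsto F(a * \langle m \rangle * \alpha)$, and assemble $\gamma$ by $\gamma(\nil) \defeql 0$ and $\gamma(\langle m \rangle * c) \defeql \gamma_m(c)$: clause K2 places $\gamma$ in $K$, and a routine unwinding shows $\gamma \Vdash (\alpha \mapsto F(a * \alpha))$. Hence $\cBI$ yields $Q(\nil)$, which is exactly the $K$-realisability of $F$.

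For $\UCb \Rightarrow \cBI$: given a c--bar $P$ with witness $\delta \in K_1$ and an inductive $Q \supseteq P$, the key step is to choose a pointwise continuous $F$ whose $K$-realisability forces $Q(\nil)$. I propose $F \defeql N$, where $N(\alpha) \defeql \max D_\alpha + 1$ is the stabilisation position from equation~\eqref{eq:D} (well-defined by $\ACUni$ in the same way as $F_\delta$). The pointwise continuity argument the paper gives for $F_\delta$ applies directly: $D_\beta = D_\alpha$, and hence $N(\beta) = N(\alpha)$, for every $\beta \in \overline{\alpha}n$ whenever $n$ is a stability point of $\delta$ along $\alpha$. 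Then $\UCb$ furnishes $\gamma \in K$ with $\gamma \Vdash N$.

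The rest of the proof is an induction on $K$ (clause K3) establishing: for every $a \in \Nat^*$, $\gamma \Vdash (\alpha \mapsto N(a * \alpha))$ implies $Q(a)$. The inductive step is immediate: if $\gamma = \sup_m \gamma_m$ realises $\alpha \mapsto N(a * \alpha)$, then each $\gamma_m$ realises $\beta \mapsto N(a * \langle m \rangle * \beta)$, so $Q(a * \langle m \rangle)$ by induction and then $Q(a)$ by the inductiveness of $Q$. The base case $\gamma = \lambda c.\,N_0 + 1$ says that $N$ is constantly $N_0$ on the cone above $a$, and a decidable case split on $N_0 \leq |a|$ vs.\ $N_0 > |a|$ handles it. If $N_0 \leq |a|$, then every $\alpha \in a$ has its last $\delta$-jump strictly before position $|a|$; a direct computation using $\alpha_b \defeql a * b * 0^\omega \in a$ gives $\delta(a * b) = \delta(\overline{a}N_0) = \delta(a)$ for every $b$, yielding $P(a)$ and thus $Q(a)$. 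If $N_0 > |a|$, the same reasoning applied at each $c \in \Nat^{N_0 - |a|}$ yields $P(a * c)$, and a meta-induction on $N_0 - |a|$ via the inductiveness of $Q$ closes to $Q(a)$. Applying the statement at $a = \nil$ completes the proof.

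The main obstacle is the choice of $F$ in the reverse direction: the obvious candidate $F \defeql F_\delta$, the stabilised \emph{value} of $\delta$, is insufficient because ``$F_\delta$ constant on a cone'' does not force $\delta$ to stabilise there. Taking the stabilisation \emph{position} $N$ instead transfers the combinatorial content of the c--bar into $F$'s values, so that constancy of $F$ on the cone above $a$ provides exactly the uniform stability data for $\delta$ needed to recover $P$ on a finite slab of extensions of $a$ and close via iterated inductiveness of $Q$.
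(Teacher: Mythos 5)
Your proposal is correct, and while the forward direction is essentially the paper's argument, the reverse direction takes a genuinely different route. For $\cBI \Rightarrow \UCb$ you use the same $\delta(a) = F(a*0^{\omega})$ and the same c--bar $P$; your $Q(a)$ (``some $\gamma \in K$ realises $\alpha \mapsto F(a*\alpha)$'') differs only cosmetically from the paper's $Q$, which asks the positive values of $\gamma$ to encode $\delta + 1$ on the cone above $a$, and both are closed under the same $\sup_{n}\gamma_{n}$ construction. For $\UCb \Rightarrow \cBI$ your choice of $F$ is in substance the paper's: the paper also takes the stabilisation position $\max D_{\alpha}$ (your $N$ is this plus one), not the stabilised value $F_{\delta}$, so the ``obstacle'' you flag is resolved the same way. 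Where you diverge is afterwards: you prove, by induction on $K$ via K\ref{K3}, that realisability of the shifted function $\alpha \mapsto N(a*\alpha)$ implies $Q(a)$, handling the constant base case $\gamma = \lambda c.\,N_{0}+1$ by extracting $P$ on the whole slab of extensions of $a$ of length $N_{0} \dotminus |a|$ and closing with finitely many (an internal induction on $N_{0} \dotminus |a|$) applications of the inductiveness of $Q$; the inductive step correctly uses $\gamma(\nil)=0$ to pass from $\gamma$ to the $\gamma_{m}$ realising the further-shifted functions. The paper instead normalises the realiser with Lemma \ref{lem:keylem} so that $\gamma(a) > 0$ forces $|a| > \gamma(a)$, deduces $\gamma(a) > 0 \imp P(a)$ directly, and then invokes the general induction over unsecured sequences (Lemma \ref{prop:UnSec}). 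Your route is self-contained, needing neither Lemma \ref{lem:KK0}, Lemma \ref{lem:keylem} nor Lemma \ref{prop:UnSec} (you essentially inline the needed $K$-induction), at the price of a more delicate base case and of redoing work that the paper's auxiliary lemmas package once and reuse in Propositions \ref{prop:dBIUC} and \ref{prop:mBILUC}; the paper's route keeps the main proof short and structurally parallel to Berger's proof that $\UC$ implies $\cFT$.
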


First, we prove the direction ($\Rightarrow$).
\begin{proposition}
$\cBI \implies \UCb$.
\end{proposition}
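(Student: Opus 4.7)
The plan is to apply $\cBI$ to a c--bar encoding the stabilisation of $F$ along the constant--$0$ ray, together with a predicate $Q$ asserting $K$--realisability of $F$ restricted to the subtree above the current node. The inductive step of the bar induction then assembles a Brouwer--operation at $a$ from Brouwer--operations at each $a * \langle n \rangle$ via the inductive clause defining $K$, and applying $\cBI$ at the root yields the $K$--realisability of $F$.

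Concretely, let $F \colon \Nat^{\Nat} \to \Nat$ be pointwise continuous. Put $\delta(a) \defeql F(a * 0^{\omega})$ and
\[ P(a) \defeqiv \left( \forall b \in \Nat^{*} \right) \delta(a) = \delta(a * b). \]
By construction $P$ is a c--bar with witness $\delta$; the bar property uses pointwise continuity of $F$, since any $n$ witnessing continuity at $\alpha$ also witnesses $P(\overline{\alpha}n)$ --- both $\overline{\alpha}n * b * 0^{\omega}$ and $\overline{\alpha}n * 0^{\omega}$ lie in $\overline{\alpha}n$ and therefore share their $F$--value.

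Now define
\[ Q(a) \defeqiv \left( \exists \gamma \in K \right) \gamma \Vdash \lambda \alpha. F(a * \alpha). \]
For $P \subseteq Q$: given $P(a)$ and any $\alpha \in \Nat^{\Nat}$, pointwise continuity of $F$ at $a * \alpha$ produces an $m$ with $F(a * \alpha) = F(a * \overline{\alpha}m * 0^{\omega})$; applying $P(a)$ with $b = \overline{\alpha}m$ then gives $F(a * \alpha) = F(a * 0^{\omega})$. Hence $\lambda \alpha. F(a * \alpha)$ is the constant function with value $F(a * 0^{\omega})$, which is realised by $\lambda c. F(a * 0^{\omega}) + 1 \in K$ via the base clause defining $K$. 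For inductivity of $Q$: assume $\left( \forall n \in \Nat \right) Q(a * \langle n \rangle)$ and apply $\AC$ (after the standard coding $\Nat^{\Nat^{*}} \cong \Nat^{\Nat}$) to extract a sequence $(\gamma_{n})_{n \in \Nat}$ with each $\gamma_{n} \in K$ and $\gamma_{n} \Vdash \lambda \alpha. F(a * \langle n \rangle * \alpha)$. Setting $\gamma(\nil) \defeql 0$ and $\gamma(\langle n \rangle * c) \defeql \gamma_{n}(c)$ yields $\gamma \in K$ by the inductive clause defining $K$, and a direct check shows $\gamma \Vdash \lambda \alpha. F(a * \alpha)$. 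Applying $\cBI$ to $P$ and $Q$ delivers $Q(\nil)$, i.e.\ a Brouwer--operation realising $F$.

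The delicate point is the inclusion $P \subseteq Q$: by itself $P(a)$ only constrains $F$ on rays of the form $a * b * 0^{\omega}$, whereas $K$--realisability of $\lambda \alpha. F(a * \alpha)$ by a base--clause Brouwer--operation demands that function be literally constant on \emph{all} of $\Nat^{\Nat}$. Pointwise continuity of $F$ is exactly what bridges the gap, reducing each value $F(a * \alpha)$ to one already controlled by $P(a)$. Modulo this observation and a single use of $\AC$ to collect subtree realisers, the proof is a routine bar--induction assembly.
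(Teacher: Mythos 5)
Your proof is correct and takes essentially the same route as the paper: the same c--bar $P$ built from $\delta(a) \defeql F(a * 0^{\omega})$, an inductive predicate $Q$ asserting the existence of a Brouwer-operation for the subtree above $a$, with $\AC$ plus the second clause of $K$ handling inductivity, and $\cBI$ applied at the root. The only difference is in the formulation of $Q$: the paper's $Q(a)$ asks for $\gamma \in K$ with $\gamma(b) > 0 \imp P(a*b) \wedge \gamma(b) = \delta(a*b) + 1$, which makes $P \subseteq Q$ immediate but defers a small pointwise-continuity argument to the final verification that $\gamma$ realises $F$, whereas your $Q(a)$ asserts $K$-realisability of $\lambda \alpha. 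F(a * \alpha)$, shifting that same continuity argument into the inclusion $P \subseteq Q$ --- which you identify as the delicate point and handle correctly.
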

\begin{proof}
  Assume $\cBI$. Let $F \colon \Nat^{\Nat} \to \Nat$ be a pointwise continuous
  function. Define a function $\delta \colon \Nat^{\Nat} \to \Nat$
  and a predicate $P \subseteq \Nat^{*}$ by
  \begin{align*}
    \delta(a) &\defeql F(a * 0^{\omega}),\\
    P(a) &\defeqiv \left( \forall b \in \Nat^{*} \right)
    \delta(a) = \delta(a * b).
  \end{align*}
  Since $F$ is pointwise continuous, $P$ is a c--bar.
  Define a predicate $Q  \subseteq \Nat^{*}$ by 
  \begin{equation}\label{prop:Q_cBIUCb}
    Q(a) \defeqiv \left( \exists \gamma \in K \right) 
    \left( \forall b\in \Nat^{*} \right) \gamma(b) > 0 \imp P(a*b)
    \wedge \gamma(b) = \delta(a*b) + 1.
  \end{equation}
  We show that
  \begin{enumerate}
    \item\label{prop:PQ_cBIUCb} $P \subseteq Q$,
    \item\label{prop:QIND_cBIUCb} $Q$ is inductive.
  \end{enumerate}

  \eqref{prop:PQ_cBIUCb}  Let $a \in \Nat^{*}$ such  that $P(a)$. Define $\gamma \in K$ by
  $\gamma \defeql \lambda b. \delta(a) + 1$. Then, $\gamma$ is a witness of
  the existential quantifier in \eqref{prop:Q_cBIUCb}.  Thus $Q(a)$.

  \eqref{prop:QIND_cBIUCb}  Let $a \in \Nat^{*}$ and suppose that
  $\left( \forall n \in \Nat \right) Q(\Cons{a}{n})$.
  By $\AC$, there exists a sequence $\left( \gamma_{n}
  \right)_{n \in \Nat}$ of Brouwer-operations such that
  \[
    \left( \forall n \in \Nat \right)\left( \forall b \in
    \Nat^{*} \right)
    \gamma_{n}(b) > 0 \imp P(\Cons{a}{n}* b)  \wedge
    \gamma_{n}(b) = \delta(\Cons{a}{n}*b) + 1.
  \]
  Put $\gamma \defeql \sup_{n \in \Nat}\gamma_{n}$. 
  Let $b \in \Nat^{*}$, and suppose that
  $\gamma(b) > 0$. Then, there exist $n \in \Nat$
  and $b' \in \Nat^{*}$ such that $b = \langle n \rangle * b' \wedge \gamma_{n}(b') > 0$.
  Thus, $P(\Cons{a}{n}*b') \wedge \gamma_{n}(b') = \delta(\Cons{a}{n}*b') + 1$,
  that is $P(a * b) \wedge \gamma(b) = \delta(a * b) + 1$.  Hence $Q(a)$.

  By $\cBI$, we obtain $Q(\nil)$, i.e.\ there exists $\gamma \in K$
  such that
  \[
    \left( \forall a \in \Nat^{*} \right)\gamma(a) > 0 \imp
    P(a) \wedge \gamma(a) = \delta(a) + 1.
  \]
  Therefore $\gamma$ realises $F$.
\end{proof}

To prove the  direction ($\Leftarrow$) of Theorem \ref{thm:EquivcBIUCb}, we need some
preliminaries.
%The proof by \citet[Theorem 3.1.2]{KreiselTroelstra} works
%as it is for this variant.
\begin{lemma}[{Kreisel and Troelstra \cite[Theorem 3.1.2]{KreiselTroelstra}}]\label{prop:UnSec}
  Let $Q$ be a predicate on $\Nat^{*}$. Then,
  \[
    \left( \forall \gamma \in K \right)
    \Bigl[ P_{\gamma} \subseteq Q  \wedge \left(\forall a \in
    \Nat^{*} \right)\left[ \left(\forall n \in \Nat\right) Q(a*\langle n \rangle) \imp Q(a) \right] \imp Q(\langle \rangle)
    \Bigr],
  \]
  where $P_{\gamma} \defeql  \left\{ a \in \Nat^{*} \mid \gamma(a) > 0 \right\}$.
\end{lemma}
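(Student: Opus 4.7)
The plan is to prove the statement by induction on the predicate $K$, using axiom K3. The naive formulation—inducting over $\gamma \in K$ that $P_\gamma \subseteq Q \imp Q(\nil)$—is too weak, because the inductive clause forces us to pass from $\gamma$ to its subtrees $\gamma_n = \lambda b. \gamma(\langle n\rangle * b)$, which implicitly shifts the root of the tree from $\nil$ to $\langle n\rangle$. Accordingly, I would fix an inductive $Q$ throughout and define the relativised predicate
\[
  R(\gamma) \defeqiv \left(\forall a \in \Nat^{*}\right)
  \Bigl[ \left(\forall b \in \Nat^{*}\right)\bigl[\gamma(b) > 0 \imp Q(a * b)\bigr] \imp Q(a) \Bigr],
\]
and then apply K3 to derive $K \subseteq R$; the lemma follows by specialising to $a = \nil$ and unfolding $P_\gamma \subseteq Q$.

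For the base clause K1, with $\gamma = \lambda b. x + 1$, the premise of $R$ at a given $a$ collapses to $\left(\forall b\right) Q(a * b)$, and in particular $\left(\forall n\right) Q(a * \langle n \rangle)$; the inductivity of $Q$ then yields $Q(a)$, so $R(\lambda b. x + 1)$ holds.

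For the inductive clause K2, suppose $\gamma(\nil) = 0$, that each $\gamma_n$ lies in $K$, and that $R(\gamma_n)$ holds for every $n$. Given $a \in \Nat^{*}$ and the hypothesis $\left(\forall b\right)\bigl[\gamma(b) > 0 \imp Q(a * b)\bigr]$, I would fix $n$ and observe that $\gamma_n(b) > 0$ entails $\gamma(\langle n \rangle * b) > 0$, hence $Q(a * \langle n \rangle * b)$. Applying $R(\gamma_n)$ at the sequence $a * \langle n \rangle$ then gives $Q(a * \langle n \rangle)$. Since this holds for every $n$, the inductivity of $Q$ supplies $Q(a)$, so $R(\gamma)$ holds.

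The only real subtlety is the choice of induction predicate: once $R$ is parameterised by an arbitrary prefix $a$, both clauses are essentially transparent, the step being the only place where the shift mechanism of Brouwer-operations interacts with the inductivity of $Q$. I expect no further obstacles, since the axioms K1--K3 directly license this second-order induction on $K$.
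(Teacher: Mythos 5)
Your proof is correct, and the relativisation to an arbitrary prefix $a$ in the induction hypothesis $R(\gamma)$ is exactly the right move to make the $K$-induction go through. The paper itself does not spell out a proof here but simply defers to Kreisel and Troelstra \cite[Theorem~3.1.2]{KreiselTroelstra}; the argument you give is the standard one found there. One cosmetic remark: in the inductive clause you also assume each $\gamma_n \in K$, but the induction schema K3 only hands you $R(\gamma_n)$, not membership in $K$; since your argument never actually uses $\gamma_n \in K$, this is harmless, but strictly you should drop that assumption to match K3.
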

\begin{proof}
  See Kreisel and Troelstra \cite[Theorem 3.1.2]{KreiselTroelstra}.
\end{proof}

We prove the following two lemmas for the sake of completeness.
\begin{lemma}[{Troelstra and van Dalen \cite[Exercise 4.8.5]{ConstMathI}}] \label{lem:KK0}
  \[
    \left( \forall \gamma \in K \right)\left( \forall \gamma' \in K_0 \right)
    \Bigl[ \left( \forall a \in \Nat^{*}\right) \bigl[ \gamma(a) > 0 \imp \gamma'(a) >
    0 \bigr] \imp \gamma' \in K \Bigr].
  \]
\end{lemma}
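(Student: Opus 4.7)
The plan is to prove the implication by induction on the predicate $K$ (i.e.\ by applying axiom \ref{K3}). Define the predicate
\[
  R(\gamma) \defeqiv \left( \forall \gamma' \in K_{0} \right)
  \Bigl[ \left( \forall a \in \Nat^{*} \right)\bigl[\gamma(a) > 0 \imp
  \gamma'(a) > 0 \bigr] \imp \gamma' \in K \Bigr],
\]
and show that $R$ is closed under the clauses of \ref{K1} and \ref{K2}; by \ref{K3} this yields $K \subseteq R$, which is exactly the statement of the lemma.

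For the base clause, suppose $\gamma = \lambda a. n + 1$ and let $\gamma' \in K_{0}$ with $\gamma(a) > 0 \imp \gamma'(a) > 0$ for all $a$. Then in particular $\gamma'(\nil) > 0$, and by the monotonicity clause of $K_{0}$ we get $\gamma'(a) = \gamma'(\nil)$ for all $a \in \Nat^{*}$. Writing $m \defeql \gamma'(\nil) \dotminus 1$, we conclude $\gamma' = \lambda a. m + 1 \in K$ by \ref{K1}.

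For the inductive clause, suppose $\gamma(\nil) = 0$ and that for every $n \in \Nat$ the function $\gamma_{n} \defeql \lambda a. \gamma(\langle n \rangle * a)$ lies in $K$ and satisfies $R(\gamma_{n})$. Let $\gamma' \in K_{0}$ with $\gamma(a) > 0 \imp \gamma'(a) > 0$. Since $\gamma'(\nil)$ is a natural number, we case-split on whether $\gamma'(\nil) > 0$ or $\gamma'(\nil) = 0$. In the first case, the same argument as in the base clause shows $\gamma' \in K$. In the second case, I would verify that for every $n \in \Nat$ the restriction $\gamma'_{n} \defeql \lambda a. \gamma'(\langle n \rangle * a)$ belongs to $K_{0}$: the monotonicity of $\gamma'_{n}$ is inherited directly from that of $\gamma'$, while the bar condition follows because any $\alpha \in \Nat^{\Nat}$ gives some $m$ with $\gamma'(\overline{\langle n \rangle * \alpha}m) > 0$, and since $\gamma'(\nil) = 0$ we must have $m \geq 1$, so $\gamma'_{n}(\overline{\alpha}(m \dotminus 1)) > 0$. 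Moreover, $\gamma_{n}(a) > 0 \imp \gamma(\langle n \rangle * a) > 0 \imp \gamma'(\langle n \rangle * a) > 0 \imp \gamma'_{n}(a) > 0$, so $R(\gamma_{n})$ applied to $\gamma'_{n}$ yields $\gamma'_{n} \in K$. Since $\gamma'(\nil) = 0$, the second clause defining $K$ (i.e.\ \ref{K2}) then gives $\gamma' \in K$.

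The only delicate point is the verification that $\gamma'_{n} \in K_{0}$ in the inductive step, which relies crucially on the decidability of $\gamma'(\nil) > 0$ (an arithmetic fact) so that we may assume $\gamma'(\nil) = 0$ when extracting the subfunctions $\gamma'_{n}$. Everything else is a direct manipulation of the defining clauses of $K$ and $K_{0}$.
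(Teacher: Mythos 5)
Your proof is correct and follows essentially the same route as the paper's: induction on $K$, with the base clause handled via the constancy forced by the second condition in $K_{0}$, and the inductive clause via the restrictions $\gamma'_{n}$. It is in fact slightly more careful than the paper's version, which writes $\gamma' = \sup_{n \in \Nat}\gamma'_{n}$ without addressing the possibility $\gamma'(\nil) > 0$ (where $\gamma'$ is instead constant positive and lands in $K$ by the first clause) --- the case distinction you make explicit.
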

\begin{proof}
  By induction on $K$.

  $\gamma = \lambda a. n + 1$: For any $\gamma' \in K_0$, if
   $\left( \forall a \in \Nat^{*} \right) \gamma(a) > 0 \imp
   \gamma'(a) > 0$, then $\gamma'$ is a constant function with
   a positive value. Thus $\gamma' \in K$.

  $\gamma = \sup_{n \in \Nat}\gamma_{n}$:
  Let $\gamma' \in K_0$ and suppose that
   $\left( \forall a \in \Nat^{*} \right) \gamma(a) > 0 \imp
   \gamma'(a) > 0$. Then, for each $n \in \Nat$, we have
   $\gamma'_{n} \defeql \lambda a. \gamma'(\langle n \rangle * a) \in
   K_0$ and $\left( \forall a \in \Nat^{*} \right) \gamma_{n}(a) > 0 \imp
   \gamma'_{n}(a) > 0$. By induction hypothesis, we have
   $\gamma'_{n} \in K$ for all $n \in \Nat$. Since
   $\gamma' = \sup_{n \in \Nat} \gamma'_{n}$, we conclude $\gamma' \in K$.
\end{proof}

\begin{lemma}[{Troelstra and van Dalen \cite[Exercises 4.8.6]{ConstMathI}}]\label{lem:keylem}
  \[
    \left( \forall \gamma \in K \right) \lambda a. \gamma(a) \cdot
    \sg(|a| \dotminus \gamma(a)) \in K.
  \]
\end{lemma}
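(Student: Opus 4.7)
The plan is to strengthen the claim so that the induction on $K$ closes. Namely, I would prove the uniform-in-offset statement
\[
  \left( \forall \gamma \in K \right)\left( \forall h \in \Nat \right)
  \gamma^{(h)} \in K,
  \qquad
  \gamma^{(h)} \defeql \lambda a. \gamma(a) \cdot \sg\bigl((|a| + h) \dotminus \gamma(a)\bigr),
\]
and then specialise to $h = 0$ to recover the lemma. The proof is by induction on $\gamma \in K$.

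For the base case $\gamma = \lambda a. n + 1$, the function $\gamma^{(h)}$ is the constant $n+1$ whenever $h \geq n+2$, which lies in $K$ by \ref{K1}. Otherwise, setting $d \defeql n + 2 - h \geq 1$, the function $\gamma^{(h)}$ is the step function that is $0$ on sequences of length less than $d$ and equals $n+1$ elsewhere. I would establish as an auxiliary fact, by an inner induction on $d$, that every such step function lies in $K$: for $d = 1$, prepending any $\langle k \rangle$ turns the step function into the constant $n+1$, so \ref{K2} applies; for $d > 1$, prepending any $\langle k \rangle$ produces the step function with parameter $d - 1$, and the inner induction closes via \ref{K2}.

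For the inductive step $\gamma = \sup_{n \in \Nat} \gamma_{n}$, we have $\gamma^{(h)}(\langle \rangle) = 0 \cdot \sg(h) = 0$ since $\gamma(\langle \rangle) = 0$. Moreover, a direct computation using $|\langle k \rangle * a| = |a| + 1$ gives
\[
  \lambda a. \gamma^{(h)}(\langle k \rangle * a) = \lambda a. \gamma_{k}(a) \cdot \sg\bigl((|a| + h + 1) \dotminus \gamma_{k}(a)\bigr) = \gamma_{k}^{(h+1)}
\]
for every $k \in \Nat$. The induction hypothesis applied to $\gamma_{k}$ with offset $h + 1$ yields $\gamma_{k}^{(h+1)} \in K$, whence \ref{K2} delivers $\gamma^{(h)} \in K$.

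The only genuinely delicate point is the choice of strengthening. A naive induction that fixes $h = 0$ fails at the successor clause, because descending through a $\sup$-node produces $\gamma_{k}^{(1)}$ rather than $\gamma_{k}^{(0)}$, which the induction hypothesis cannot deliver. Parametrising uniformly in $h$ absorbs this unit shift in the threshold, and the base case is then handled by the elementary step-function observation above.
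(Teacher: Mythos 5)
Your proof is correct, but it takes a genuinely different route from the paper in the inductive step. The base case $\gamma = \lambda a.\, n+1$ is the same in substance: the paper notes that $\lambda a.\,(n+1)\cdot\sg(|a| \dotminus (n+1))$ is obtained by $n+2$ applications of the second clause of \eqref{def:K}, which is exactly your inner induction on the threshold $d$. The divergence is at $\gamma = \sup_{n\in\Nat}\gamma_{n}$. You strengthen the statement to the offset-parametrised family $\gamma^{(h)}$, so that descending through a $\sup$-node sends offset $h$ to $h+1$ and the step closes directly by \ref{K2}; formally this is the induction axiom \ref{K3} applied to the predicate $Q(\gamma) \defeqiv \left( \forall h \in \Nat \right) \gamma^{(h)} \in K$, and the whole argument uses only \ref{K1}--\ref{K3}. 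The paper instead keeps the unparametrised statement and absorbs the unit shift you identify by a different device: from the induction hypothesis it forms $\xi \defeql \sup_{n\in\Nat}\gamma_{n}' \in K$, checks that $\xi(a) > 0$ implies $\gamma'(a) > 0$ and that $\gamma' \in K_{0}$, and then concludes $\gamma' \in K$ by Lemma \ref{lem:KK0}. Your version buys a self-contained proof that never mentions $K_{0}$ or Lemma \ref{lem:KK0}, and it makes explicit why the naive induction fails; the paper's version buys brevity by reusing Lemma \ref{lem:KK0}, which it has proved anyway and needs again elsewhere (e.g.\ in Proposition \ref{prop:dBIUC}).
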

\begin{proof}
  By induction on $K$. Put $\gamma' \defeql \lambda a. \gamma(a) \cdot
    \sg(|a| \dotminus \gamma(a))$.

  $\gamma = \lambda a. n + 1$: This follows from Kreisel and Troelstra
  \cite[Theorem 3.2.2 (iv), (vi)]{KreiselTroelstra}.
  Alternatively, it is clear that 
  $\lambda a. (n+1) \cdot \sg(|a| \dotminus (n+1))$ is introduced in
  $K$ by $n + 2$ -times application of the second clause of \eqref{def:K}.

  $\gamma = \sup_{n \in \Nat}\gamma_{n}$:
  By induction hypothesis, we have $\gamma_{n}' \in K$ for all
  $n \in \Nat$. Put $\xi \defeql \sup_{n \in \Nat}
  \gamma_{n}' \in K$.  Let $a \in \Nat^{*}$ and suppose that $\xi(a) >
  0$. Then, there exists $n \in \Nat$ and $a' \in \Nat^{*}$ such that $a =
  \langle n \rangle * a'$ and $\gamma_{n}'(a') > 0$.
  Thus $\gamma'(a) = \gamma(a) \cdot \sg(|a| \dotminus \gamma(a)) >
  0$. Clearly, we have $\gamma' \in K_{0}$. Hence
  $\gamma' \in K$ by Lemma \ref{lem:KK0}.
\end{proof}

We now prove the direction ($\Leftarrow$) of Theorem
\ref{thm:EquivcBIUCb}.
\begin{proposition}\label{prop:UCbimpcBI}
$\UCb \implies \cBI$.
\end{proposition}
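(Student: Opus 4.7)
The plan is to use $\UCb$ to extract from the c--bar $P$ a Brouwer-operation $\gamma \in K$ satisfying $\{a \in \Nat^{*} : \gamma(a) > 0\} \subseteq P$, and then conclude with Lemma \ref{prop:UnSec}. So, assume $\UCb$; let $P \subseteq \Nat^{*}$ be a c--bar with witness $\delta \colon \Nat^{*} \to \Nat$, and let $Q \subseteq \Nat^{*}$ be an inductive predicate with $P \subseteq Q$.

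Applying $\UCb$ to $F_{\delta}$ of Section \ref{sec:UCb} directly would not suffice: a realiser of $F_{\delta}$ returns only the stabilised value of $\delta$ along $\alpha$, not the stabilisation place, so $\gamma(a) > 0$ need not imply $P(a)$. Instead, I would apply $\UCb$ to the function that records the entire stabilisation prefix. Fix any injective coding $c \colon \Nat^{*} \to \Nat$, put $m_{\alpha} \defeql \max D_{\alpha} + 1$ with $D_{\alpha}$ as in \eqref{eq:D}, and define $F \colon \Nat^{\Nat} \to \Nat$ by $F(\alpha) \defeql c(\overline{\alpha} m_{\alpha})$. The same argument given for $F_{\delta}$ shows $F$ is pointwise continuous: if $\delta$ is constant on all extensions of $\overline{\alpha} n$, then every $\beta \in \overline{\alpha} n$ has $D_{\beta} = D_{\alpha}$ and $m_{\beta} = m_{\alpha} \leq n$, so $\overline{\beta} m_{\beta} = \overline{\alpha} m_{\alpha}$. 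By $\UCb$, fix $\gamma \in K$ with $\gamma \Vdash F$.

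The main step is to verify $\gamma(a) > 0 \implies P(a)$. For such $a$, the value $\gamma(a) \dotminus 1$ decodes to a unique $a_{0} \in \Nat^{*}$ with $\overline{\alpha} m_{\alpha} = a_{0}$ for every $\alpha \in a$. The step I expect to be the main obstacle is showing $a_{0} \preccurlyeq a$: taking $\alpha = a * 0^{\omega}$ forces $a_{0} \preccurlyeq a * 0^{\omega}$, and the case $|a_{0}| > |a|$ is ruled out by comparing with $\alpha' = a * \langle 1 \rangle * 0^{\omega} \in a$, for which $\overline{\alpha'} m_{\alpha'} = a_{0}$ must also hold but disagrees with $a_{0}$ at position $|a|$. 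Once $a_{0} \preccurlyeq a$ is established, $m_{\alpha} = |a_{0}|$ for every $\alpha \in a$ means $\delta$ is constant from index $|a_{0}|$ onwards along each such $\alpha$; for an arbitrary $b \in \Nat^{*}$, applying this to $\alpha = a * b * 0^{\omega}$ gives $\delta(a * b) = \delta(a_{0}) = \delta(a)$, hence $P(a)$. Thus $P_{\gamma} \subseteq P \subseteq Q$, and Lemma \ref{prop:UnSec} applied to $\gamma$ and $Q$ delivers $Q(\nil)$.
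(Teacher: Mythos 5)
Your proof is correct but takes a genuinely different route from the paper. The paper applies $\UCb$ to the simpler function $F(\alpha) = \max D_{\alpha}$ and then invokes Lemma~\ref{lem:keylem} to normalise the resulting Brouwer-operation so that $\gamma(a) > 0 \imp |a| > \gamma(a)$, from which $P(a)$ follows by comparing the stabilisation points of $a * 0^{\omega}$ and $a * b * 0^{\omega}$. You instead encode the entire stabilisation prefix $\overline{\alpha}m_{\alpha}$ into the value of $F$ via an injective coding $c$, and then recover, for each $a$ with $\gamma(a) > 0$, a canonical $a_{0}$ satisfying $\overline{\alpha}m_{\alpha} = a_{0}$ for all $\alpha \in a$. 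Your two-branch comparison (between $a * 0^{\omega}$ and $a * \langle 1 \rangle * 0^{\omega}$) to force $a_{0} \preccurlyeq a$ is a nice substitute for Lemma~\ref{lem:keylem}: it is the step that prevents the realiser from "looking past" $a$, which is exactly what Lemma~\ref{lem:keylem} guarantees for the paper. Your argument is thus self-contained modulo a standard coding of $\Nat^{*}$, whereas the paper's version reuses Lemma~\ref{lem:keylem}, which it needs anyway in Propositions~\ref{prop:dBIUC} and~\ref{prop:mBILUC}; this also makes the paper's proof run closer in structure to Berger's proof that $\UC \implies \cFT$.
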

\begin{proof}
  Let $P \subseteq \Nat^{*}$ be a bar, and let $\delta \colon
  \Nat^{*} \to \Nat$ be a function such that $P(a) \leftrightarrow \left( \forall b
  \in \Nat^{*} \right) \delta(a) = \delta(a*b)$. Let $Q \subseteq
  \Nat^{*}$ be an inductive predicate such that $P \subseteq Q$. 
  Define a function $F \colon \Nat^{\Nat} \to \Nat$ by
  \[
    F(\alpha) \defeql \max D_{\alpha},
  \]
  where $D_{\alpha}$ is given by the equation \eqref{eq:D}. Then, $F$ is pointwise
  continuous. By $\UCb$, there exists
  a Brouwer-operation $\gamma \in K$ such that $F_{\gamma} = F$.  By Lemma
  \ref{lem:keylem}, we may assume that 
    $
    \left( \forall a \in \Nat^{*}
    \right) \gamma(a) > 0 \rightarrow |a| > \gamma(a). 
    $
  Let $a \in \Nat^{*}$ such that $\gamma(a) > 0$.
  Let $b \in \Nat^{*}$. Then,
  $|a| > \gamma(a) \wedge \gamma(a) = \gamma(a * b)$ so that
  \[
    |a| > \max D_{a * 0^{\omega}} + 1 = \max D_{a * b * 0^{\omega}} + 1.
  \]
  Thus $\delta(a) = \delta(a * b)$. Hence $P(a)$, and so $Q(a)$.
  By Proposition \ref{prop:UnSec}, we obtain $Q(\nil)$.
\end{proof}
This completes the proof of Theorem \ref{thm:EquivcBIUCb}.
We note that the structure of the proof of 
Proposition \ref{prop:UCbimpcBI} is quite similar to the proof of the
implication $\UC \implies \cFT$ by Berger \cite[Proposition 2]{BergerUCandcFT}.

\section{Characterisation of bar inductions by 
continuity principles}\label{sec:BIandUC}
We show that the decidable bar induction and the monotone bar
induction can be characterised by statements similar to $\UCb$.

\subsection{Decidable bar induction}
%A bar $P \subseteq \Nat^{*}$ is said to be \emph{decidable}
%if $P$ is detachable from $\Nat^{*}$.
%\[
%  \left( \forall a \in \Nat^{*} \right)  \neg P(a) \vee  P(a).
%\]
The decidable bar induction $\dBI$ is the following statement:
\begin{description}
  \item[\dBI] For any detachable bar $P \subseteq \Nat^{*}$ and a
    predicate $Q \subseteq \Nat^{*}$, if $P \subseteq Q$ and $Q$ is
    inductive, then $Q(\langle \rangle)$.
\end{description}

We relate $\dBI$ to two notions of continuity.

First, recall that in Section \ref{sec:UCb} we defined a function $F \colon \Nat^{\Nat}
\to \Nat$ to be \emph{$K_{0}$-realisable} if there exists a neighbourhood
function $\gamma \in K_0$ such that $F_{\gamma} = F$.

Next, given a function $F \colon \Nat^{\Nat} \to \Nat$, a function $g \colon
\Nat^{\Nat} \to \Nat$ is a \emph{modulus} of $F$ if 
\begin{equation}\label{eq:modulus}
  \left( \forall \alpha \in \Nat^{\Nat} \right)\left( \forall \beta
  \in \overline{\alpha}g(\alpha) \right) F(\beta) = F(\alpha).
\end{equation}

The following lemma is due to Beeson \cite[Chapter VI, Section 8,
Exercise 8]{BeesonFoundationConstMath}.\footnote{In 
Beeson \cite{BeesonFoundationConstMath}, neighbourhood functions are
called \emph{associates}.}
\begin{lemma}
  \label{lem:ModulusK0Realsable}
  A function $F \colon \Nat^{\Nat} \to \Nat$  is $K_0$-realisable 
  if and only if $F$ has a pointwise continuous modulus of continuity.
\end{lemma}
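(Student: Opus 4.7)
The plan is to prove each direction by giving an explicit construction: a pointwise continuous modulus from a neighbourhood function, and a neighbourhood function from a pointwise continuous modulus.

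For the forward direction, given $\gamma \in K_0$ realising $F$, I would define
\[
  g(\alpha) \defeql \min_{z \in \Nat}\left[\gamma(\overline{\alpha}z) > 0 \right],
\]
which is a function on $\Nat^{\Nat}$ by $\ACUni$, since the minimising $z$ exists by the first clause of $K_0$, and the minimality condition is decidable in $z$. The persistence clause of $K_0$ immediately yields that $g$ is a modulus of $F$: for $\beta \in \overline{\alpha}g(\alpha)$ one has $\gamma(\overline{\beta}g(\alpha)) = \gamma(\overline{\alpha}g(\alpha)) > 0$, and the same equality, combined with the fact that $\gamma$ vanishes on the strictly shorter initial segments of $\alpha$ (and hence of $\beta$), forces $g(\beta) = g(\alpha)$. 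Thus $g$ is its own modulus of continuity and in particular pointwise continuous, and $F_\gamma = F$ gives $F(\beta) = F(\alpha)$.

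For the reverse direction, suppose $g$ is a pointwise continuous modulus of $F$. The naive candidate $\gamma(a) = F(a*0^{\omega})+1$ iff $g(a*0^{\omega}) \leq |a|$ fails the persistence clause of $K_0$, since $g$ may jump upward when one passes from $a$ to $a*b$. I would instead define
\[
  \gamma(a) \defeql
  \begin{cases}
    F(a * 0^{\omega}) + 1 & \text{if } \exists c \preccurlyeq a.\, g(c * 0^{\omega}) \leq |c|, \\
    0 & \text{otherwise,}
  \end{cases}
\]
where the guard is decidable by finite search over the initial segments of $a$. Persistence is then immediate: any witnessing prefix $c$ of $a$ is also a prefix of $a*b$, and since $c * 0^{\omega}$ agrees with both $a * 0^{\omega}$ and $a * b * 0^{\omega}$ on the first $|c| \geq g(c * 0^{\omega})$ positions, the modulus property forces $F(a * 0^{\omega}) = F(c * 0^{\omega}) = F(a*b*0^{\omega})$. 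For the covering clause, given $\alpha$, pointwise continuity of $g$ supplies an $m$ on which $g$ is constant; then for $n \defeql \max(g(\alpha), m)$ the prefix $c = \overline{\alpha}n$ satisfies $g(c * 0^{\omega}) = g(\alpha) \leq n = |c|$, so $\gamma(\overline{\alpha}n) > 0$. Finally, at the minimising $n$ any such witnessing prefix already determines $F$ via the modulus property, yielding $F_\gamma(\alpha) = F(\alpha)$.

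The main obstacle is producing a \emph{persistent} element of $K_0$ from an arbitrary pointwise continuous modulus: the pointwise behaviour of $g$ at $a * 0^{\omega}$ does not control its behaviour at extensions, so a direct definition does not satisfy the monotonicity clause of $K_0$. The prefix-existence trick circumvents this by committing to the first initial segment at which $g$ certifies the stabilisation of $F$, after which the value of $F$ is locked in by the modulus property regardless of further extensions.
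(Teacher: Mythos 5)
Your proposal is correct and follows essentially the same route as the paper: the forward direction takes $g(\alpha)=\min\{n \mid \gamma(\overline{\alpha}n)>0\}$ via $\ACUni$, and the reverse direction uses exactly the paper's neighbourhood function, namely $\gamma(a)=F(a*0^{\omega})+1$ when some prefix $a'\preccurlyeq a$ satisfies $|a'|\geq g(a'*0^{\omega})$ and $0$ otherwise, with the same verification of the covering and persistence clauses. Your added remarks (decidability of the prefix search, $g$ being its own modulus, and why the persistence clause forces $F_{\gamma}=F$ at the minimal securing length) only make explicit what the paper leaves implicit.
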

\begin{proof}
  Suppose that $F$ is realised by $\gamma \in K_0$.
  By $\ACUni$, define $g \colon \Nat^{\Nat} \to \Nat$  by
  \[
    g(\alpha) \defeql \min\left\{ n \in \Nat \mid
      \gamma(\overline{\alpha}n) > 0 \right\}.
  \]
  Then, $g$ is a modulus of $F$. It is also clear that $g$ is pointwise continuous.

  Conversely, suppose that $F$ has a pointwise continuous modulus 
  $g \colon \Nat^{\Nat} \to \Nat$. 
  Define a function $\gamma \colon \Nat^{*} \to \Nat$ by
  \[
  \gamma(a) \defeql \begin{cases}
    F(a*0^{\omega}) + 1&
       \text{if $\left(\exists a' \preccurlyeq a \right)|a'| \geq g(a' * 0^{\omega})$},\\
    0& \text{otherwise}.
  \end{cases}
  \]
  We show that $\gamma \in K_0$  and that $\gamma$ realises $F$.
  Let $\alpha \in \Nat^{\Nat}$. Since $g$ is pointwise continuous, there exists
  $n \in \Nat$ such that $n \geq g(\overline{\alpha}n * 0^{\omega})$.
  Since $g$ is a modulus of $F$,
  \[
    \gamma(\overline{\alpha}n) = F(\overline{\alpha}n * 0^{\omega}) +
    1 = F(\alpha) + 1.
  \]
  Next, let $a \in \Nat^{*}$ and suppose that $\gamma(a) > 0$.
  Then, there exists $a' \preccurlyeq a$ such that $|a'| \geq
  g(a' * 0^{\omega})$. Thus, $F(a' * 0^{\omega}) = F(a*0^{\omega}) =
  F(a * b * 0^{\omega})$ for all $b \in \Nat^{*}$. Hence, $\left(
  \forall b \in \Nat^{*} \right) \gamma(a) = \gamma(a * b)$.
  Therefore, $\gamma \in K_0$ and  $\gamma$ realises $F$.
\end{proof}

We recall the following result from Troelstra and van Dalen \cite[Proposition 8.13
(i)]{ConstMathI}.
\begin{lemma}\label{lem:KK0equivdBI}
  $\dBI \iff K = K_0$.
\end{lemma}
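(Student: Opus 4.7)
The plan: Lemma \ref{lem:BwOpisNhood} already supplies $K \subseteq K_{0}$, so the equivalence reduces to $\dBI \iff K_{0} \subseteq K$. Both directions exploit the decidability of $\gamma(a) > 0$ for $\gamma \in K_{0}$ and of membership in detachable subsets of $\Nat^{*}$.

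For ($\Rightarrow$), assume $\dBI$ and take $\gamma \in K_{0}$. The support $P_{\gamma} = \left\{ a \in \Nat^{*} \mid \gamma(a) > 0 \right\}$ is a detachable bar by definition of $K_{0}$. Define a predicate $Q \subseteq \Nat^{*}$ by $Q(a) \defeqiv \lambda b.\gamma(a*b) \in K$, so that $Q(\nil)$ is precisely $\gamma \in K$. If $\gamma(a) > 0$, the second $K_{0}$-clause makes $\lambda b.\gamma(a*b)$ the positive constant $\gamma(a)$, which belongs to $K$ by \ref{K1}; hence $P_{\gamma} \subseteq Q$. For inductiveness, assume $\left( \forall n \in \Nat \right) Q(\Cons{a}{n})$ and split on whether $\gamma(a) > 0$: the positive case repeats the constant-function argument, while if $\gamma(a) = 0$ the hypotheses feed directly into clause \ref{K2}, because $\left(\lambda b.\gamma(a*b)\right)(\nil) = 0$ and its $n$-th child $\lambda b.\gamma(\Cons{a}{n}*b)$ lies in $K$ by hypothesis. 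Then $\dBI$ yields $Q(\nil)$, i.e.\ $\gamma \in K$.

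For ($\Leftarrow$), assume $K_{0} \subseteq K$, let $P \subseteq \Nat^{*}$ be a detachable bar, and let $Q$ be inductive with $P \subseteq Q$. Define $\gamma \colon \Nat^{*} \to \Nat$ by $\gamma(a) \defeql 1$ if some $a' \preccurlyeq a$ satisfies $P$ and $\gamma(a) \defeql 0$ otherwise; detachability of $P$ makes $\gamma$ well-defined, $P$ being a bar yields the $K_{0}$-bar condition, and the second $K_{0}$-clause follows because any prefix of $a$ is also a prefix of $a*b$. Hence $\gamma \in K$. Lemma \ref{prop:UnSec} cannot be applied to $Q$ directly, because $P_{\gamma}$ is the upward closure of $P$ and may contain elements outside $Q$; the key move is to enlarge $Q$ to
\[
  Q'(a) \defeqiv Q(a) \vee E(a),
\]
where $E(a) \defeqiv \left(\exists a' \preccurlyeq a\right)\left[a' \neq a \wedge P(a')\right]$ is the detachable predicate asserting that a proper initial segment of $a$ lies in $P$.

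It remains to verify that $Q'$ is inductive and contains $P_{\gamma}$. Inclusion $P_{\gamma} \subseteq Q'$ is immediate by casing on whether the witness $a' \preccurlyeq a$ equals $a$ (giving $Q(a)$ via $P \subseteq Q$) or is strict (giving $E(a)$). For inductiveness, observe that $E(\Cons{a}{n}) \iff E(a) \vee P(a)$ for every $n$; so assuming $\left(\forall n \in \Nat\right) Q'(\Cons{a}{n})$, split on the decidable $P(a) \vee E(a)$: if $P(a)$, then $Q(a)$ via $P \subseteq Q$; if $E(a)$, then $Q'(a)$ directly; and otherwise each $Q'(\Cons{a}{n})$ collapses to $Q(\Cons{a}{n})$, whence $Q(a)$ by inductiveness of $Q$. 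Lemma \ref{prop:UnSec} applied to $\gamma$ and $Q'$ then yields $Q'(\nil)$, and vacuity of $E(\nil)$ gives $Q(\nil)$. The main non-routine step is this enlargement of $Q$ to $Q'$, which bridges the gap between $P$ and its upward closure $P_{\gamma}$.
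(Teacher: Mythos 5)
Your proof is correct. The paper itself gives no argument and simply cites Troelstra and van Dalen \cite[Proposition 8.13 (i)]{ConstMathI}, so let me record why your self-contained version works. For ($\Rightarrow$), the predicate $Q(a)\defeqiv \lambda b.\gamma(a*b)\in K$ is exactly the right abstraction: the positive case lands in $K$ by \ref{K1} via the monotonicity clause of $K_0$, the zero case is precisely the hypothesis of \ref{K2}, and the decidability of $\gamma(a)>0$ licenses the case split, so $\dBI$ delivers $Q(\nil)$, i.e.\ $\gamma\in K$. For ($\Leftarrow$), you correctly identify the only delicate point: the natural neighbourhood function $\gamma$ built from $P$ has support $P_\gamma$ that is the upward (along $\preccurlyeq$) closure of $P$, which need not lie in $Q$, so Lemma \ref{prop:UnSec} cannot be applied to $Q$ directly. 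Your enlargement $Q'=Q\vee E$, with $E$ the detachable predicate ``some \emph{proper} prefix lies in $P$,'' fixes this cleanly: $P_\gamma\subseteq Q'$ by casing on whether the witness is proper, the identity $E(\Cons{a}{n})\leftrightarrow E(a)\vee P(a)$ makes $Q'$ inductive via a decidable three-way case split, and $E(\nil)$ is vacuous, so $Q'(\nil)$ collapses to $Q(\nil)$. This is essentially the standard argument behind the cited result, with the $Q'$-enlargement being the main non-routine step, which you flag correctly.
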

\begin{proof}
  See Troelstra and van Dalen \cite[Proposition 8.13 (i)]{ConstMathI}.
\end{proof}

\begin{proposition}\label{prop:dBIUC}
  The following are equivalent.
  \begin{enumerate}
    \item\label{prop:dBIUC1} $\dBI$.
    \item\label{prop:dBIUC2} Every $K_{0}$-realisable function $F \colon \Nat^{\Nat} \to \Nat$
      is $K$-realisable.
    \item\label{prop:dBIUC3} Every function $F \colon \Nat^{\Nat} \to \Nat$
      that has a pointwise continuous modulus is $K$-realisable.
  \end{enumerate}
\end{proposition}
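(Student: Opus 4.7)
The plan is to run the cycle $(1)\Rightarrow(2)\Leftrightarrow(3)\Rightarrow(1)$, with Lemmas \ref{lem:KK0equivdBI} and \ref{lem:ModulusK0Realsable} as the backbone. The equivalence $(2)\Leftrightarrow(3)$ is almost tautological: Lemma \ref{lem:ModulusK0Realsable} identifies the class of $K_{0}$-realisable functions with those admitting a pointwise continuous modulus, so the two statements have the same premise. For $(1)\Rightarrow(2)$ I would invoke Lemma \ref{lem:KK0equivdBI}: $\dBI$ gives $K=K_{0}$, hence any $K_{0}$-realiser is automatically a $K$-realiser.

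The substantive direction is $(2)\Rightarrow(1)$, which I plan to prove by establishing $K_{0}\subseteq K$ under hypothesis (2) and then invoking Lemma \ref{lem:KK0equivdBI}. Given $\gamma \in K_{0}$, the key move is to consider not $F_{\gamma}$ but the \emph{position} function
\[
  F(\alpha) \defeql \min\left\{n \in \Nat \mid \gamma(\overline{\alpha}n) > 0\right\},
\]
obtained via $\ACUni$. This $F$ serves as its own pointwise continuous modulus, since any $\beta$ agreeing with $\alpha$ on the first $F(\alpha)$ entries yields the same first-positive index. By Lemma \ref{lem:ModulusK0Realsable}, $F$ is therefore $K_{0}$-realisable, and (2) supplies $\gamma' \in K$ with $\gamma' \Vdash F$. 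Setting $\gamma'' \defeql \lambda a. \gamma'(a) \cdot \sg(|a| \dotminus \gamma'(a))$, Lemma \ref{lem:keylem} places $\gamma'' \in K$ and guarantees that $\gamma''(a) > 0$ forces $|a| > \gamma'(a) = F(a * 0^{\omega}) + 1$.

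For any such $a$, the least $n$ with $\gamma(\overline{a * 0^{\omega}}n) > 0$ then satisfies $n < |a|$, so the witness prefix sits inside $a$ and the monotonicity clause of $K_{0}$ propagates positivity to $\gamma(a)$. Hence $\gamma''(a) > 0 \imp \gamma(a) > 0$, and Lemma \ref{lem:KK0} applied with $\gamma'' \in K$ and $\gamma \in K_{0}$ delivers $\gamma \in K$. The principal obstacle is exactly this last passage: one must recover membership of the original $\gamma$ in $K$ from a $K$-realiser of a derived function, and the choice of $F$ as the \emph{position} of first positivity (rather than the value $F_{\gamma}$, which would discard that positional information) is what allows Lemma \ref{lem:keylem} to translate $F(a * 0^{\omega}) < |a|$ into a statement about $\gamma$ on a prefix of $a$, at which point Lemma \ref{lem:KK0} closes the argument.
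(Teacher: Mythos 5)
Your proof is correct and takes essentially the same approach as the paper: the paper's auxiliary neighbourhood function $\gamma'$ realises exactly your position function $F(\alpha)=\min\{n \in \Nat \mid \gamma(\overline{\alpha}n)>0\}$, and both arguments then feed this function into hypothesis \eqref{prop:dBIUC2}, normalise the resulting $K$-realiser with Lemma \ref{lem:keylem}, trace positivity back to a prefix of $a$, and close with Lemma \ref{lem:KK0}. The only cosmetic difference is that you certify $K_{0}$-realisability of $F$ via the modulus characterisation (Lemma \ref{lem:ModulusK0Realsable}), whereas the paper writes down the $K_{0}$-realiser explicitly.
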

\begin{proof}
  In view of Lemma \ref{lem:ModulusK0Realsable} and Lemma
  \ref{lem:KK0equivdBI}, it suffices to show that \eqref{prop:dBIUC2}
  implies $K_0 \subseteq K$.

  Assume \eqref{prop:dBIUC2}. Let $\gamma \in K_0$.
  Define a neighbourhood function $\gamma' \in K_0$ by
  \[
    \gamma'(a) \defeql \begin{cases}
      0 &\text{if $\left(\forall b \preccurlyeq a\right) \gamma(b) = 0$},\\
      \min\left\{ |b| \mid b \preccurlyeq a \wedge \gamma(b) > 0
      \right\} + 1
      &\text{otherwise}.
    \end{cases}
  \]
  By the assumption, there exists a Brouwer-operation $\xi \in K$ that realises the function 
  $F_{\gamma'} \colon \Nat^{\Nat} \to \Nat$ induced by $\gamma'$.
  By Lemma \ref{lem:keylem}, we may assume that
  \[
    \left( \forall a  \in \Nat^{*} \right) \xi(a) > 0  \imp
    |a| > \xi(a).
  \]
  Let $a \in \Nat^{*}$, and suppose that $\xi(a) > 0$. Then,
    $
    |a| > \xi(a) = F_{\gamma'}(a * 0^{\omega}) + 1.
    $
  Thus, there exists $k \in \Nat$ such that
    $
    \gamma'(\overline{a * 0^{\omega}}k) = F_{\gamma'}(a * 0^{\omega}) + 1.
    $
  By the definition of $\gamma'$, there exists $b \preccurlyeq
  \overline{a * 0^{\omega}}k$ such that $\gamma(b) > 0$ and $|b| + 1 =
  \gamma'(\overline{a * 0^{\omega}}k)$. Hence $b \preccurlyeq
  a$ so that $\gamma(a) > 0$.  By Lemma \ref{lem:KK0}, we obtain
  $\gamma \in K$.
\end{proof}

\begin{remark}\label{rem:BergerdFTUC}
  The decidable fan theorem is a version of the fan theorem formulated
  with respect to decidable bars on $\Bin^{*}$.  Berger
  \cite{BergerFANandUC} showed  that the decidable fan theorem and
  the following statement are equivalent:
  \begin{quote}
    Every function $F \colon \Bin^{\Nat} \to \Nat$ that has a pointwise
    continuous modulus is uniformly continuous.
  \end{quote}
  Here, a modulus of $F \colon \Bin^{\Nat} \to \Nat$
  is similarly defined as in \eqref{eq:modulus}.
  Proposition \ref{prop:dBIUC} says that this characterisation
  naturally extends to the decidable bar induction.
\end{remark}

\subsection{Monotone bar induction}
The monotone bar induction $\mBI$ is the following statement:
\begin{description}
  \item[\mBI] For any monotone bar $P \subseteq \Nat^{*}$ and a predicate $Q \subseteq
    \Nat^{*}$, if $P \subseteq Q$ and $Q$ is inductive, then
    $Q(\langle \rangle)$.
\end{description}
Here, a bar $P \subseteq \Nat^{*}$ is \emph{monotone} if
  $
  \left( \forall a,b \in \Nat^{*} \right) P(a) \imp P(a*b).
  $

A predicate $R \subseteq \Nat^{\Nat} \times \Nat$
is said to be \emph{locally continuous} if
\begin{equation*}
  \left( \forall \alpha \in  \Nat^{\Nat}\right)
  \left( \exists x \in \Nat \right)
  \left( \exists y \in \Nat \right)
  \left( \forall \beta \in \overline{\alpha}x \right)
   R(\beta,y).
\end{equation*}
Given a locally continuous predicate $R \subseteq \Nat^{\Nat} \times \Nat$,
we say that a function $F \colon \Nat^{\Nat} \to \Nat$ \emph{refines}
$R$ if $\left( \forall \alpha \in \Nat^{\Nat} \right) R(\alpha,
F(\alpha))$, i.e.\ $F$ is a choice function of $R$.

\begin{proposition}\label{prop:mBILUC}
  The following are equivalent.
  \begin{enumerate}
    \item\label{prop:mBILUC1} $\mBI$.
    \item\label{prop:mBILUC2} Every locally continuous predicate $R
      \subseteq \Nat^{\Nat} \times \Nat$ has a $K$-realisable function
      that refines $R$.
  \end{enumerate}
\end{proposition}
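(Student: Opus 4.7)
The plan is to prove both directions along the same lines as the proof of Theorem~\ref{thm:EquivcBIUCb}, using the monotonicity of $P$ in place of the c--bar property and encoding a locally continuous predicate $R$ as a monotone bar in the natural way.

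For the direction $\mBI \implies$ \eqref{prop:mBILUC2}, I would start from a locally continuous predicate $R \subseteq \Nat^{\Nat} \times \Nat$ and define
\[
  P(a) \defeqiv \left( \exists y \in \Nat \right) \left( \forall \beta \in a \right) R(\beta, y).
\]
Local continuity of $R$ makes $P$ a bar, and $P$ is clearly monotone since $\beta \in a * b$ implies $\beta \in a$. Imitating the proof of $\cBI \implies \UCb$, I would define
\[
  Q(a) \defeqiv \left( \exists \gamma \in K \right) \left( \forall b \in \Nat^{*} \right)
    \gamma(b) > 0 \imp \left( \forall \beta \in a*b \right) R(\beta, \gamma(b) \dotminus 1).
\]
The inclusion $P \subseteq Q$ is witnessed by constant Brouwer-operations $\lambda b. y + 1$; the inductivity of $Q$ follows by applying $\AC$ to a family $(\gamma_n)_{n \in \Nat}$ witnessing $Q(a * \langle n \rangle)$ and forming $\gamma \defeql \sup_{n \in \Nat} \gamma_n$, which belongs to $K$ by clause \ref{K2}. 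Applying $\mBI$ yields $Q(\nil)$, giving a Brouwer-operation $\gamma \in K$ with the property that $\gamma(b) > 0 \imp (\forall \beta \in b) R(\beta, \gamma(b) \dotminus 1)$. The function $F_{\gamma} \colon \Nat^{\Nat} \to \Nat$ induced by $\gamma$ via \eqref{eq:BrouwrCont} then refines $R$, since for each $\alpha$ the defining minimum $m$ satisfies $\alpha \in \overline{\alpha}m$.

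For the converse direction \eqref{prop:mBILUC2} $\implies \mBI$, let $P$ be a monotone bar and $Q \supseteq P$ an inductive predicate, and define
\[
  R(\alpha, n) \defeqiv P(\overline{\alpha}n).
\]
The bar property of $P$ immediately gives local continuity of $R$ (take $x = y = n$ witnessing $P(\overline{\alpha}n)$). By assumption there is a $K$-realisable $F \colon \Nat^{\Nat} \to \Nat$ that refines $R$; pick $\gamma \in K$ with $\gamma \Vdash F$. Applying Lemma~\ref{lem:keylem} (and noting that the modified function still realises $F$ since the minimum $m$ is only shifted past $\gamma(\overline{\alpha}m)$) we may assume $\gamma(a) > 0 \imp |a| > \gamma(a)$. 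Then for any $a \in \Nat^{*}$ with $\gamma(a) > 0$ we have $\gamma(a) = F(a*0^{\omega}) + 1$ and $|a| > F(a*0^{\omega}) + 1$, so $\overline{a*0^{\omega}}F(a*0^{\omega})$ is a proper initial segment of $a$. The refinement property $R(a*0^{\omega}, F(a*0^{\omega}))$ unfolds to $P(\overline{a*0^{\omega}}F(a*0^{\omega}))$; by monotonicity of $P$ we conclude $P(a)$, hence $Q(a)$. Lemma~\ref{prop:UnSec} applied to $\gamma$ and $Q$ now yields $Q(\nil)$.

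The main obstacle is the backward direction, specifically ensuring that the value $F(a*0^{\omega})$ is small enough to produce an initial segment of $a$ on which $P$ holds, so that monotonicity of $P$ can be invoked. This is precisely the role of the length--bound trick from Lemma~\ref{lem:keylem}, mirroring the step in the proof of Proposition~\ref{prop:UCbimpcBI}; the rest is essentially bookkeeping analogous to the $\cBI \iff \UCb$ equivalence.
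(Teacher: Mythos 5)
Your proof follows essentially the same approach as the paper's: the same monotone bar $P$ and the same inductive predicate $Q$ built from an existentially quantified Brouwer-operation in the forward direction, and the same $R(\alpha,n)\defeqiv P(\overline{\alpha}n)$ together with the length-bound normalisation from Lemma~\ref{lem:keylem} and an appeal to Lemma~\ref{prop:UnSec} in the backward direction. The only difference is cosmetic (your explicit side-remark that the Lemma~\ref{lem:keylem} modification preserves the realised function is left implicit in the paper).
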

\begin{proof}
  \eqref{prop:mBILUC1} $\Rightarrow$ \eqref{prop:mBILUC2}
  Assume $\mBI$. Let $R \subseteq \Nat^{\Nat} \times \Nat$ be a
  locally continuous predicate. Define a predicate $P \subseteq \Nat^{*}$ by
  \[
    P(a) \defeqiv \left( \exists x \in \Nat \right)\left( \forall
    \alpha \in \Nat^{\Nat} \right) \alpha \in a \imp R(\alpha, x).
  \]
  Clearly, $P$ is a monotone bar. Define a predicate $Q \subseteq
  \Nat^{*}$  by
  \begin{multline}
    \label{eq:mBLUC21_Q}
    Q(a) \defeqiv \left( \exists \gamma \in K \right)
    \left( \forall \alpha \in \Nat^{\Nat} \right)
    \left( \forall b \in \Nat^{*} \right)  \\
    \bigl[ \gamma(b) > 0 \wedge \alpha \in a * b \bigr] \imp R(\alpha, \gamma(b)
    \dotminus 1).
  \end{multline}
  We show that
  \begin{enumerate}
    \item\label{prop:PQ_mBILUC} $P \subseteq Q$,
    \item\label{prop:QIND_mBILUC} $Q$ is inductive.
  \end{enumerate}

  \eqref{prop:PQ_mBILUC}  Let $a \in \Nat^{*}$ such that $P(a)$.
  Then, there exists $n \in \Nat$ such that $\left( \forall \alpha \in
  \Nat^{\Nat} \right) \alpha \in a \imp R(\alpha, n)$. Put $\gamma \defeql \lambda a. n + 1$,
  which is in $K$.
  Then, $\gamma$ is a witness of the existential quantifier in
  \eqref{eq:mBLUC21_Q}.  Thus $Q(a)$.

  \eqref{prop:QIND_mBILUC} 
  Let $a \in \Nat^{*}$ and suppose that $\left( \forall n \in \Nat
  \right) Q(\Cons{a}{n})$.  By $\AC$, there exists a sequence 
  $\left( \gamma_{n} \right)_{n \in \Nat}$ of Brouwer-operations
  such that
  \[
    \left( \forall n \in \Nat \right)
    \left( \forall \alpha \in \Nat^{\Nat} \right)
    \left( \forall b \in \Nat^{*} \right)
    \left[ \gamma_{n}(b) > 0 \wedge \alpha \in \Cons{a}{n} * b \imp
    R(\alpha, \gamma_{n}(b) \dotminus 1)\right].
  \]
  Put $\gamma \defeql \sup_{n \in \Nat}\gamma_{n}$.  Let
  $\alpha \in \Nat^{\Nat}$ and  $b \in
  \Nat^{*}$, and suppose that $\gamma(b) > 0$ and $\alpha \in a * b$.
  Then, there exist $n \in \Nat$ and $b' \in \Nat^{*}$ such that
  $b = \langle n \rangle * b' \wedge \gamma_{n}(b') > 0$.  Thus, $\alpha \in \Cons{a}{n} * b'$, so
  $R(\alpha, \gamma_{n}(b') \dotminus 1)$, that is
  $R(\alpha, \gamma(b) \dotminus 1)$. Hence $Q(a)$.

  By $\mBI$, we obtain $Q(\nil)$, i.e.\ there exists a Brouwer-operation $\gamma \in K$
  such that
  \[
    \left( \forall \alpha \in \Nat^{\Nat} \right)\left( \forall a \in \Nat^{*} \right)
    \gamma(a) > 0 \wedge \alpha \in a \imp 
    R(\alpha, \gamma(a) \dotminus 1).
  \]
  Thus,  the function $F_{\gamma} \colon
  \Nat^{\Nat} \to \Nat$ induced by $\gamma$ refines $R$.

  \eqref{prop:mBILUC2} $\Rightarrow$ \eqref{prop:mBILUC1}
  Assume \eqref{prop:mBILUC2}. Let $P$ be a monotone bar, and
  let $Q \subseteq \Nat^{*}$ be an inductive predicate such that
  $P \subseteq Q$. Define a predicate $R \subseteq \Nat^{\Nat} \times
  \Nat$ by
  \[
    R(\alpha, x) \defeqiv P(\overline{\alpha}x). 
  \]
  Then $R$ is clearly locally continuous.
  Thus, there exists a Brouwer-operation $\gamma \in K$ such that
  \begin{equation*}
    \left( \forall \alpha \in \Nat^{\Nat} \right)
    P(\overline{\alpha}F_{\gamma}(\alpha)).
  \end{equation*}
  By Lemma \ref{lem:keylem}, we may assume that
    $
    \left( \forall a \in \Nat^{*} \right) \gamma(a) > 0  \imp
    |a| > \gamma(a).
    $
  Let $a \in \Nat^{*}$ such that $\gamma(a) > 0$.
  Then, we have $\overline{a * 0^{\omega}}\gamma(a) \preccurlyeq
  \overline{a * 0^{\omega}}|a| = a$. Since
  $P(\overline{a * 0^{\omega}}(\gamma(a) \dotminus 1))$ and $P$ is monotone,
  we have $P(a)$, and thus $Q(a)$. Since $Q$ is inductive,
  we obtain $Q(\nil)$ by Proposition \ref{prop:UnSec}.
\end{proof} 
\section{Continuity axioms}\label{sec:PCN}
A continuity axiom states that if we have $\left(\forall \alpha \in
\Nat^{\Nat}\right)\left( \exists x \in \Nat \right) R(\alpha,x)$ , then
the dependence of $x \in \Nat$ on $\alpha \in \Nat^{\Nat}$
is continuous. By varying the strength of continuity with which
$x$ depends on $\alpha$, we obtain several principles.
The following continuity axioms are well known; see Troelstra and van
Dalen \cite[Chapter 4, Section 6 and Section 8]{ConstMathI}.
\begin{description}
  \item[\BCN] $\left( \forall \alpha \in \Nat^{\Nat} \right)
    \left( \exists x \in \Nat \right) R(\alpha,x) \imp
    \left( \exists \gamma \in K \right)
    \left( \forall \alpha \in \Nat^{\Nat} \right)
    R(\alpha,F_{\gamma}(\alpha))$.

  \item[\CN] $\left( \forall \alpha \in \Nat^{\Nat} \right)
    \left( \exists x \in \Nat \right) R(\alpha,x) \imp 
    \left( \exists \gamma \in K_{0} \right)
    \left( \forall \alpha \in \Nat^{\Nat} \right)
    R(\alpha,F_{\gamma}(\alpha))$.

  \item[\WCN] $\left( \forall \alpha \in \Nat^{\Nat} \right)
    \left( \exists x \in \Nat \right) R(\alpha,x) \imp 
    \left( \forall \alpha \in \Nat^{\Nat} \right)
    \left( \exists x,y \in \Nat \right)
%    \left( \exists y \in \Nat \right)
    \left( \forall \beta \in \overline{\alpha}x \right) R(\beta,y)$.
\end{description}
Here, $F_{\gamma}$ is the function $F_{\gamma} \colon \Nat^{\Nat} \to \Nat$
induced by $\gamma \in K$ (or $\gamma \in K_{0}$).
The notions of continuity that correspond to $\BCN$,
$\CN$, and $\WCN$ are that of $K$-realisability, $K_0$-realisability, and local
continuity respectively.

The following is immediate from Proposition \ref{prop:dBIUC} and
Proposition \ref{prop:mBILUC}.
\begin{theorem}\label{thm:BarCont}
  \leavevmode
  \begin{enumerate}
    \item\label{thm:BarCont1}  $\BCN \iff \dBI + \CN$.
    \item\label{thm:BarCont2}  $\BCN \iff \mBI + \WCN$.
  \end{enumerate}
\end{theorem}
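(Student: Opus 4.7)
The plan is to derive each equivalence by combining the appropriate earlier characterisation (Proposition \ref{prop:dBIUC} for part \eqref{thm:BarCont1}, Proposition \ref{prop:mBILUC} for part \eqref{thm:BarCont2}) with the corresponding ``weak'' continuity axiom. The structure is the same in both cases: $\BCN$ splits into a weaker continuity axiom ($\CN$ or $\WCN$) together with an upgrade from the weaker notion of continuity (being $K_0$-realisable, or merely refining a locally continuous relation) to $K$-realisability, and that upgrade is exactly what the relevant proposition characterises.

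For \eqref{thm:BarCont1}, in the direction $(\Rightarrow)$, $\BCN$ implies $\CN$ immediately because $K \subseteq K_0$ by Lemma~\ref{lem:BwOpisNhood}. To derive $\dBI$, I will use Proposition~\ref{prop:dBIUC}: given a $K_0$-realisable $F \colon \Nat^{\Nat} \to \Nat$, apply $\BCN$ to $R(\alpha,x) \defeqiv F(\alpha) = x$ to obtain $\gamma \in K$ with $F_{\gamma} = F$. Conversely, assuming $\dBI + \CN$, given $R$ with $(\forall \alpha)(\exists x)\, R(\alpha,x)$, first apply $\CN$ to get $\gamma \in K_{0}$ with $(\forall \alpha)\, R(\alpha, F_{\gamma}(\alpha))$; then $F_{\gamma}$ is $K_0$-realisable, so by Proposition~\ref{prop:dBIUC} under $\dBI$ it is $K$-realisable, yielding some $\gamma' \in K$ with $F_{\gamma'} = F_{\gamma}$, which witnesses $\BCN$.

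For \eqref{thm:BarCont2}, in the direction $(\Rightarrow)$, $\BCN$ implies $\WCN$ because for any $\gamma \in K$ the induced $F_{\gamma}$ is pointwise continuous: given $\alpha$, set $y \defeql F_{\gamma}(\alpha)$ and pick $x$ so that $F_{\gamma}$ is constantly $y$ on $\overline{\alpha}x$. To derive $\mBI$, I will use Proposition~\ref{prop:mBILUC}: given a locally continuous $R \subseteq \Nat^{\Nat} \times \Nat$, local continuity gives $(\forall \alpha)(\exists x)\, R(\alpha,x)$, so $\BCN$ produces $\gamma \in K$ whose induced function refines $R$. Conversely, assuming $\mBI + \WCN$, given $R$ with $(\forall \alpha)(\exists x)\, R(\alpha,x)$, apply $\WCN$ to see that $R$ is locally continuous; then Proposition~\ref{prop:mBILUC} under $\mBI$ produces a $K$-realisable refinement of $R$, which witnesses $\BCN$.

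There is no real obstacle here: both equivalences are essentially formal bookkeeping, with each proposition handling the ``hard'' content (the inductive upgrade to $K$). The only mild care needed is to phrase the reverse directions so that the continuity axiom is applied first to produce the weaker continuous witness, and then the proposition is applied to the resulting function or relation to promote it to a $K$-realisable one.
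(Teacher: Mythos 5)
Your proof is correct and takes exactly the approach the paper intends: the paper states the theorem is ``immediate from Proposition~\ref{prop:dBIUC} and Proposition~\ref{prop:mBILUC}'', and your argument simply spells out that bookkeeping, using $K \subseteq K_0$ for $\BCN \implies \CN$, pointwise continuity of $F_\gamma$ for $\BCN \implies \WCN$, and the graph relation $R(\alpha,x) \defeqiv F(\alpha)=x$ (resp.\ a locally continuous $R$ directly) to extract item~\eqref{prop:dBIUC2} of Proposition~\ref{prop:dBIUC} (resp.\ item~\eqref{prop:mBILUC2} of Proposition~\ref{prop:mBILUC}) from $\BCN$. No gaps.
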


\begin{remark}
  Theorem \ref{thm:BarCont} is not new.
  The equivalence \eqref{thm:BarCont1}
  can be found in Troelstra and van Dalen \cite[Chapter 4, Proposition 8.13
  (iii)]{ConstMathI}, and the equivalence
  \eqref{thm:BarCont2} was shown by Kreisel and Troelstra \cite[Theorem 5.6.3
  (ii)]{KreiselTroelstra}.
  However, Proposition \ref{prop:dBIUC} and Proposition \ref{prop:mBILUC}
  make these equivalences obvious. Moreover, they clarify the
  complementary roles of various versions of bar induction and continuity
  axiom, which is one of the main contributions of the present work.
\end{remark}

We can formulate a continuity axiom with respect to the notion
of pointwise continuity. The principle of pointwise continuity (\PCN) is the following
statement:
\begin{description}
  \item[\PCN] $\left( \forall \alpha \in \Nat^{\Nat} \right)
              \left( \exists x \in \Nat \right) R(\alpha,x) \\\imp
              \left( \exists \delta \in  \Nat^{\Nat^{*}} \right)
              \left( \forall \alpha \in \Nat^{\Nat} \right) \left(
              \exists x \in \Nat \right) \left( \forall a \in
              \Nat^{*} \right) \delta(\overline{\alpha}x) =
              \delta(\overline{\alpha}x * a) \wedge 
              R(\alpha, \delta(\overline{\alpha}x))$.
\end{description}
The principle $\PCN$ asserts the existence of a pointwise continuous choice
function from the assumption $\left( \forall \alpha \in \Nat^{\Nat} \right)
              \left( \exists x \in \Nat \right) R(\alpha,x)$.
One can show that $\PCN$ is equivalent to the following statement:
\begin{multline*}
 \left( \forall \alpha \in \Nat^{\Nat} \right)
  \left( \exists x \in \Nat \right) R(\alpha,x) \\ \imp
  \left( \exists \delta \in  \Nat^{\Nat^{*}} \right)
  \left( \forall \alpha \in \Nat^{\Nat} \right) \left(
  \exists x \in \Nat \right) \left( \forall a \in
  \Nat^{*}
  \right) \delta(\overline{\alpha}x) =
  \delta(\overline{\alpha}x * a)\\ \wedge \left( \forall
  \beta \in \overline{\alpha}x
  \right) R(\beta, \delta(\overline{\alpha}x)).
\end{multline*}
The following equivalence is immediate from Theorem
\ref{thm:EquivcBIUCb}.
\begin{proposition}
  $\BCN \iff \cBI + \PCN$.
\end{proposition}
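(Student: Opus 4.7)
The plan is to combine Theorem \ref{thm:EquivcBIUCb} ($\cBI \iff \UCb$) with the observation that $\PCN$ is essentially the ``pointwise continuous choice function'' counterpart of $\BCN$: $\BCN$ extracts a $K$-realisable choice function from $\left( \forall \alpha \in \Nat^{\Nat} \right)\left( \exists x \in \Nat \right) R(\alpha,x)$, $\PCN$ extracts a merely pointwise continuous one, and $\UCb$ exactly bridges the two notions of continuity.

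For the direction ($\Rightarrow$), assume $\BCN$. To obtain $\cBI$ it suffices, by Theorem \ref{thm:EquivcBIUCb}, to prove $\UCb$: given a pointwise continuous $F \colon \Nat^{\Nat} \to \Nat$, apply $\BCN$ to $R(\alpha,x) \defeqiv F(\alpha) = x$ (whose premise is trivial) to produce $\gamma \in K$ with $F_{\gamma} = F$. To obtain $\PCN$, given $\left( \forall \alpha \in \Nat^{\Nat} \right)\left( \exists x \in \Nat \right) R(\alpha,x)$, use $\BCN$ to get $\gamma \in K$ with $R(\alpha, F_{\gamma}(\alpha))$ for all $\alpha$, and set $\delta \defeql \lambda a. \gamma(a) \dotminus 1$. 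By Lemma \ref{lem:BwOpisNhood}, $\gamma \in K_{0}$, so for each $\alpha$ there is a least $x$ with $\gamma(\overline{\alpha}x) > 0$; then $\gamma(\overline{\alpha}x) = \gamma(\overline{\alpha}x * a)$ for every $a \in \Nat^{*}$, so $\delta(\overline{\alpha}x) = \delta(\overline{\alpha}x * a)$ and $\delta(\overline{\alpha}x) = F_{\gamma}(\alpha)$, whence $R(\alpha, \delta(\overline{\alpha}x))$.

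For the direction ($\Leftarrow$), assume $\cBI + \PCN$, so Theorem \ref{thm:EquivcBIUCb} gives $\UCb$. Given $\left( \forall \alpha \in \Nat^{\Nat} \right)\left( \exists x \in \Nat \right) R(\alpha,x)$, apply $\PCN$ to obtain $\delta \in \Nat^{\Nat^{*}}$ such that for each $\alpha$ there exists $x$ with $\delta(\overline{\alpha}x) = \delta(\overline{\alpha}x * a)$ for all $a \in \Nat^{*}$ and $R(\alpha, \delta(\overline{\alpha}x))$. This is precisely the condition $\delta \in K_{1}$ of Section \ref{sec:UCb}, so the function $F_{\delta}(\alpha) \defeql \delta(\overline{\alpha}(\max D_{\alpha} + 1))$ with $D_{\alpha}$ as in \eqref{eq:D} is well-defined and pointwise continuous. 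For any stability witness $x$, every $m \geq x$ lies outside $D_{\alpha}$, hence $\max D_{\alpha} + 1 \leq x$ and $F_{\delta}(\alpha) = \delta(\overline{\alpha}x)$, so $R(\alpha, F_{\delta}(\alpha))$ holds. Now $\UCb$ supplies $\gamma \in K$ with $F_{\gamma} = F_{\delta}$, yielding $\left( \forall \alpha \in \Nat^{\Nat} \right) R(\alpha, F_{\gamma}(\alpha))$.

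Since Theorem \ref{thm:EquivcBIUCb} does the structural work on the $\cBI$/$\UCb$ side, no step is genuinely hard. The only point requiring care is the bookkeeping in ($\Leftarrow$), where one must verify that the pointwise continuous function $F_{\delta}$ extracted from a $\PCN$-witness $\delta$ actually refines $R$ and does not merely happen to share $\delta$'s stable value; this is settled by the observation above that the canonical stabilization index $\max D_{\alpha} + 1$ is bounded by any stability witness $x$ supplied by $\PCN$.
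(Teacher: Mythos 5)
Your proof is correct and follows exactly the route the paper intends (the paper states the proposition is immediate from Theorem \ref{thm:EquivcBIUCb}): $\BCN$ gives $\UCb$ and $\PCN$ directly, while conversely a $\PCN$-witness $\delta$ lies in $K_{1}$, the associated pointwise continuous $F_{\delta}$ of Section \ref{sec:UCb} refines $R$, and $\UCb$ (obtained from $\cBI$) makes it $K$-realisable. One cosmetic point: since $1 \in D_{\alpha}$ by definition, your claim that every $m \geq x$ lies outside $D_{\alpha}$ fails for a stability witness $x \leq 1$, but this is harmless because any $x' \geq x$ is again a stability witness (and in the case $\max D_{\alpha} + 1 \geq x$ the stability of $\delta$ beyond $\overline{\alpha}x$ yields $F_{\delta}(\alpha) = \delta(\overline{\alpha}x)$ directly).
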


\section{$\Pi^{0}_{1}$ bar induction}
The $\Pi^{0}_{1}$ bar induction ($\PiOneBI$) is defined with
respect to a bar that is a $\Pi^{0}_{1}$-set, where a predicate $P
\subseteq \Nat^{*}$ is a \emph{$\Pi_{1}^{0}$-set} if there is a
detachable predicate $D \subseteq \Nat^{*} \times \Nat$ such that
\[
  \left( \forall a \in \Nat^{*} \right) \left[ P(a) \leftrightarrow
  \left( \forall n \in \Nat \right) D(a,n)\right].
\]
Specifically, $\PiOneBI$ is the following statement:
\begin{description}
  \item[\PiOneBI] For any $\Pi^{0}_{1}$-bar $P \subseteq \Nat^{*}$ and a
    predicate $Q \subseteq \Nat^{*}$, if $P \subseteq Q$ and $Q$ is
    inductive, then $Q(\langle \rangle)$.
\end{description}
Note that every c--bar is a $\Pi^{0}_{1}$-set modulo the coding of finite
sequences in $\Nat$. Thus, $\PiOneBI$ implies $\cBI$.
We show, however, that $\PiOneBI$ is not an intuitionistic principle.
%This is contrary to $\cBI$, which follows from $\mBI$.

Recall that $\LLPO$ (the lesser limited principle of omniscience) is 
$\Sigma^{0}_{1}$ De Morgan's Law, i.e.\ for any $\alpha, \beta \in
\Nat^{\Nat}$,
\begin{multline*}
  \neg \left[ \left( \exists n \in \Nat \right) \alpha(n) \ne 0 \wedge \left( 
  \exists n \in \Nat\right) \beta(n) \ne 0 \right] \imp \\
  \neg \left(\exists n \in \Nat  \right)\alpha(n) \ne 0 \vee
  \neg \left(\exists n \in \Nat  \right) \beta(n) \ne 0.
\end{multline*}
\begin{proposition}
$\PiOneBI$ implies $\LLPO$.
\end{proposition}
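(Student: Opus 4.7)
The plan is to apply $\PiOneBI$ to a carefully chosen $\Pi^{0}_{1}$-bar $P$ and an inductive predicate $Q$ with $P \subseteq Q$ and $Q(\nil)$ equivalent to the $\LLPO$-conclusion for the given $\alpha, \beta$. I abbreviate $E_{\alpha} \defeqiv (\exists n)\alpha(n) \neq 0$ and similarly $E_{\beta}$, so that the hypothesis reads $\neg[E_{\alpha} \wedge E_{\beta}]$ and the goal is $\neg E_{\alpha} \vee \neg E_{\beta}$.

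For $Q$, I take
\[
 Q(a) \defeqiv (\forall n \geq |a|)\alpha(n) = 0 \vee (\forall n \geq |a|)\beta(n) = 0,
\]
so that $Q(\nil)$ is exactly $\neg E_{\alpha} \vee \neg E_{\beta}$. The predicate $Q$ is inductive: given $(\forall k)\, Q(a * \langle k \rangle)$, since $Q(a * \langle k \rangle)$ depends only on $|a|+1$, it suffices to examine $Q(a * \langle 0 \rangle)$. If its first disjunct $(\forall n \geq |a|+1)\alpha(n) = 0$ holds, I case-split on the decidable predicate $\alpha(|a|) = 0$: in the case $\alpha(|a|) = 0$ the first disjunct of $Q(a)$ follows immediately; in the case $\alpha(|a|) \neq 0$, $E_{\alpha}$ holds, whence $\neg[E_{\alpha} \wedge E_{\beta}]$ yields $\neg E_{\beta}$ and hence the second disjunct of $Q(a)$. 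The second disjunct of $Q(a * \langle 0 \rangle)$ is handled symmetrically.

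For $P$ I would start from the candidate
\[
 P_{0}(a) \defeqiv (|a| \geq 1) \wedge (\forall n)\bigl[(a(0) = 0 \imp \alpha(n) = 0) \wedge (a(0) \geq 1 \imp \beta(n) = 0)\bigr],
\]
which is manifestly a $\Pi^{0}_{1}$-set and satisfies $P_{0} \subseteq Q$ by case analysis on $a(0)$. If $P_{0}$, or a suitable refinement $P$, is a bar, then $\PiOneBI$ delivers $Q(\nil)$, i.e.\ $\LLPO$.

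The main obstacle is verifying the bar property. The naive argument at $k = 1$ reduces $P_{0}(\overline{\gamma}\, 1)$ to $\bigl(\gamma(0) = 0 \imp \neg E_{\alpha}\bigr) \wedge \bigl(\gamma(0) \geq 1 \imp \neg E_{\beta}\bigr)$, which is exactly the instance of $\LLPO$ we are trying to derive, so some refinement is required. I expect the fix to be enriching $P_{0}$ with additional $\Pi^{0}_{1}$-clauses that let the path $\gamma$ pivot to the complementary disjunct whenever it exhibits a nonzero value of $\alpha$ or $\beta$, invoking $\neg[E_{\alpha} \wedge E_{\beta}]$ to force the other side. Keeping this refinement genuinely $\Pi^{0}_{1}$---as opposed to an unpackaged disjunction of two $\Pi^{0}_{1}$-sets, which need not be $\Pi^{0}_{1}$---is the delicate technical point I anticipate having to handle.
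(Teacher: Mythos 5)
Your $Q$ is fine and your verification that it is inductive is correct, and you have correctly understood that $Q(\nil)$ should be the $\LLPO$-conclusion. But you have also correctly identified that your proposal does not, as it stands, constitute a proof: $P_{0}$ is not a bar, the bar property you would need is precisely the instance of $\LLPO$ you are trying to derive, and the ``enrichment'' you gesture at is left entirely unspecified. That missing step is the whole content of the proposition, so what you have is a correct outline of the shape of the argument together with an unresolved gap at its centre.

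The missing idea is this: do not put a full $\Pi^{0}_{1}$ condition at depth $1$. Your $P_{0}$ at a length-$1$ node $\langle n \rangle$ asserts $\neg E_{\alpha}$ or $\neg E_{\beta}$ (according as $n = 0$ or $n \geq 1$), which is why the bar claim collapses into the conclusion. Instead, let the path $\gamma$ choose a \emph{single coordinate to probe}, and make the depth-$1$ slice decidable. The paper takes
\[
  P \;\defeql\; \bigl\{\, \langle n \rangle \mid \alpha(n) = 0 \,\bigr\}
  \;\cup\; \bigl\{\, \nil \mid (\forall n)\,\beta(n) = 0 \,\bigr\},
\]
so that $P$ is supported on lengths $0$ and $1$ only, the length-$1$ slice is the decidable test $\alpha(a(0)) = 0$, and the full $\Pi^{0}_{1}$ content sits at the root. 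This is a $\Pi^{0}_{1}$-set (the case split on $|a|$ is decidable), and it is a bar: given $\gamma$, decide whether $\alpha(\gamma(0)) = 0$; if yes, $\overline{\gamma}1 \in P$; if no, $E_{\alpha}$ holds, so by $\neg[E_{\alpha}\wedge E_{\beta}]$ we get $\neg E_{\beta}$ and hence $\nil \in P$. Then take $Q \defeql P \cup \{\nil \mid (\forall n)\alpha(n) = 0\}$; inductivity is trivial since at lengths $\geq 2$ the tree is empty for $Q$, and at the root $(\forall k)\,Q(\langle k\rangle)$ literally says $(\forall k)\,\alpha(k) = 0$. Note that this $Q$ is not your $Q$; your $P_{0}$ was designed to refine your $Q$, but with the paper's decidable depth-$1$ slice one must change $Q$ to match, since $\alpha(a(0)) = 0$ does not imply your $Q(a)$. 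So the fix is not an ``enrichment'' of your $P_0$ but a replacement that trades your strong, $\Pi^{0}_{1}$ depth-$1$ clauses for a decidable single-coordinate test, with the $\Pi^{0}_{1}$ part pushed to $\nil$.
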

\begin{proof}
  Assume $\PiOneBI$.  Let $\alpha,\beta \in \Nat^{\Nat}$, and suppose that
  \[
  \neg \left[ \left(\exists n  \in \Nat  \right) \alpha(n) \ne 0
    \wedge \left(\exists n \in \Nat  \right) \beta(n) \ne
  0 \right].
  \]

  Define a predicate $P \subseteq \Nat^{*}$ by
  \[
    P \defeql \left\{ \langle n \rangle  \mid \alpha(n) = 0 \right\}
       \cup \left\{ \langle   \rangle \mid \left( \forall n \in \Nat \right)
       \beta(n) = 0 \right\}.
  \]
  Note that  $P$ is a
  $\Pi^{0}_{1}$-set.  We show that $P$ is a bar. Let $\gamma \in
  \Nat^{\Nat}$. Then, either $\alpha(\gamma(0)) = 0$ or
  $\alpha(\gamma(0)) \ne 0$.  If $\alpha(\gamma(0)) = 0$, then
  $\overline{\gamma} 1 \in P$.  If $\alpha(\gamma(0)) \ne 0$, then
  $\left( \exists n \in \Nat \right) \beta(n) \ne 0$ implies
  $
   \left(\exists n \in \Nat  \right)\alpha(n) \ne 0 \wedge \left( 
   \exists n \in \Nat\right) \beta(n) \ne 0$,
  a contradiction. Thus, $\left(\forall n \in \Nat \right)
  \beta(n) = 0$. Hence, $\overline{\gamma}0 = \langle   \rangle \in
  P$. Therefore, $P$ is a bar. 
  
  Define a predicate $Q \subseteq \Nat^{*}$ by
  \[
    Q \defeql P \cup \left\{ \langle \rangle \mid \left( \forall n
      \in \Nat \right) \alpha(n) = 0 \right\}.
  \]
  Then, $Q$ is clearly inductive and $P \subseteq Q$. Thus, by
  $\PiOneBI$, we have $\langle \rangle \in Q$, i.e.\
  \[
    \left( \forall n \in \Nat \right) \alpha(n) = 0 \vee
    \left( \forall n \in \Nat \right) \beta(n) = 0,
  \]
  or equivalently,
  $
  \neg \left( \exists n \in \Nat \right) \alpha(n) \ne 0 \vee
  \neg \left( \exists n \in \Nat \right) \beta(n) \ne 0.
  $
\end{proof}
It is well known that the $\Sigma^{0}_{1}$ bar induction implies $\LPO$
(the limited principle of omniscience, also known as the $\Sigma^{0}_{1}$ law
of excluded middle); see Troelstra and van Dalen \cite[Chapter 4,
Excercise 4.8.11]{ConstMathI}.\footnote{The example of the bar that is
used to derive $\LPO$ from the $\Sigma^{0}_{1}$ bar induction is
attributed to Kleene \cite[Section 7.14]{KleeneVesley},
but the bar defined in \cite[Section
7.14]{KleeneVesley} is not a $\Sigma^{0}_{1}$ set.} Since
the continuity axiom $\WCN$ contradicts $\LLPO$ (Troelstra and van
Dalen \cite[Chapter 4, Proposition 6.5]{ConstMathI}),
%Thus, $\mBI$ does not imply
%$\PiOneBI$ in a suitable formal system where $\mBI$ and $\WCN$ are
%consistent. 
those facts show that the monotonicity of the bar is essential for
an intuitionistically acceptable formulation of bar induction.
Note that the situation is quite different for the fan theorem;
since the $\Pi_{1}^{0}$ fan theorem (the fan theorem with respect to
$\Pi_{1}^{0}$ binary bars) is an instance of the full fan theorem, it is
intuitionistically acceptable.

\section{Further work}
We now have the following implications.
  \begin{enumerate}
    \item $\mBI \implies \cBI \implies \dBI$.
    \item $\BCN \implies \CN \implies \PCN \implies \WCN$.
  \end{enumerate}
It remains to be seen which of these implications are
strict, that is cannot be reversed.
%Fourman and Hyland \cite{FourmanHyland} has shown
%that $\mBI$ is strictly stronger than $\dBI$ using a sheaf model, so
%at least one of the implications in $\mBI \implies \cBI \implies
%\dBI$ must be strict.
In view of the strength of the notion of
continuity associated with each principles, we conjecture that all
of the above implications are strict.
%In the case of binary sequences, \citet{LubarskySepFan}
%has separated several versions of fan theorems using Kripke models.
%Among the fan theorems which they separated are the full fan theorem,
%$\cFT$, and the decidable fan theorem, which correspond
%to $\mBI$, $\cBI$, and $\dBI$ respectively. Thus, it is worth
%investigating whether similar models can be use to obtain
%the separation of these bar inductions.
%There seems to be no result separating various forms of Continuity
%Principle.

\section*{Acknowledgements}
We thank Hajime Ishihara and Takako Nemoto for helpful comments on the
subject of this paper. The author is supported by Core-to-Core Program
A.~Advanced Research Networks by Japan Society for the Promotion of
Science (JSPS).
%\bibliographystyle{myabbrvnat}
%\bibliographystyle{jloganal}
%\bibliographystyle{abbrv}
%\bibliography{refs}
\newcommand{\noop}[1]{}

\end{document}